\DeclareMathSymbol\bbDelta  \mathord{bbold}{"01}
\numberwithin{equation}{section}
\newtheorem{theorem}{Theorem}[section]
\newtheorem{lemma}[theorem]{Lemma}
\newtheorem{proposition}[theorem]{Proposition}
\theoremstyle{definition}
\newtheorem{definition}[theorem]{Definition}
\theoremstyle{remark}
\newtheorem{remark}[theorem]{Remark}
\newtheorem{example}[theorem]{Example}
\newtheorem{notation}[theorem]{Notation}
\renewcommand{\phi}{\varphi}
\renewcommand{\epsilon}{\varepsilon}
\newcommand{\tr}[1]{{#1}^{\#}}
\providecommand{\keywords}[1]
{
  \small	
  \textbf{\textit{Keywords---}} #1
}
\providecommand{\MSCsubj}[1]
{
  \small	
  \textbf{\textit{2020 Mathematics Subject Classification ---}} #1
}
\newcommand{\catA}{\mathscr{A}}
\newcommand{\catB}{\mathscr{B}}
\newcommand{\catC}{\mathscr{C}}
\newcommand{\catD}{\mathscr{D}}
\newcommand{\modcatA}{\mathscr{M}}
\newcommand{\we}{\mathscr{W}}
\newcommand{\indexI}{\mathbb{I}}
\newcommand{\indexJ}{\mathbb{J}}
\newcommand{\monadA}{\mathbb{T}}
\newcommand{\monadAtriple}{(T,\mu,\eta)}
\newcommand{\comonadA}{\mathbb{K}}
\newcommand{\comonadAtriple}{(K,\Delta,\epsilon)}
\newcommand{\n}[1]{\underline{#1}}
\newcommand{\wo}[1]{\widehat{#1}}
\newcommand{\poset}[1]{\mathcal{P}(\hspace{0.1em}\underline{\text{#1}}\hspace{0.1em})}
\newcommand{\I}{\mathscr{I}}
\newcommand{\initial}[1]{-\infty_{#1}}
\newcommand{\terminal}[1]{+\infty_{#1}}
\newcommand{\basepoint}[1]{\infty_{#1}}
\newcommand{\monadcat}{\text{Monad}}
\newcommand{\fun}[2]{\operatorname{Fun}(#1 \text{,}#2)}
\newcommand{\End}[1]{\operatorname{End}(#1)}
\newcommand{\Mod}[1]{\text{Mod}_{#1}}
\newcommand{\ob}[1]{\text{Ob}\,#1}
\newcommand{\id}[1]{\text{Id}_{#1}}
\newcommand{\comp}[1]{#1^\text{c}}
\newcommand{\subsetA}{\mathscr{U}}
\newcommand{\subsetB}{\mathscr{V}}
\DeclareMathOperator{\hocolim}{hocolim}
\newcommand{\hocolimunder}[1]{{\hocolim}_{#1}}
\newcommand{\syshocolim}{\mathbb{H}\text{C}}
\DeclareMathOperator{\icofib}{icofib}
\newcommand{\Ran}[2]{\text{Ran}_{#2} {#1}}
\DeclareMathOperator{\Cat}{Cat}
\newcommand{\im}{\text{Im}}
\newcommand{\whisk}{\,{*}\,}
\newcommand{\theintcube}[3]{\chi^{{#1}}_{{#2},{#3}}}
\newcommand{\ifibcube}[2]{\text{T}^{#1}_{#2}}
\newcommand{\monadcube}[2]{\mathbb{T}^{#1}_{#2}}
\newcommand{\mult}{\mu}
\newcommand{\cross}[1]{\operatorname{cr}^{#1}}
\title{Constructing monads from cubical diagrams and homotopy colimits}
\author{Kristine Bauer \footnote{University of Calgary, \href{mailto:bauerk@ucalgary.ca}{bauerk@ucalgary.ca}}
\and Robyn Brooks\footnote{University of Utah, \href{mailto:robyn.brooks@utah.edu}{robyn.brooks@utah.edu}}
\and Kathryn Hess \footnote{\'{E}cole Polytechnique F\'{e}d\'{e}rale de Lausanne, \href{mailto:kathryn.hess@epfl.ch} {kathryn.hess@epfl.ch}}  
\and Brenda Johnson \footnote{Union College, \href{mailto:johnsonb@union.edu}{johnsonb@union.edu}} 
\and Julie Rasmusen\footnote{University of Warwick, \href{mailto:julie.rasmusen@warwick.ac.uk}{julie.rasmusen@warwick.ac.uk}}
\and Bridget Schreiner \footnote{University of Notre Dame, \href{mailto:bschrein@nd.edu}{bschrein@nd.edu}} 
 \\ }
\begin{document}

\maketitle{}

\begin{abstract}
     This paper is the first step in a general program for defining  cocalculus towers of functors via sequences of compatible monads.
  Goodwillie's calculus of homotopy functors  inspired many new functor calculi in a wide range of contexts in algebra, homotopy theory and geometric topology.  Recently, the third and fourth authors have developed a general program for constructing generalized calculi from sequences of compatible comonads.  In this paper, we dualize the first step of the Hess-Johnson program, focusing on monads rather than comonads.  We consider categories equipped with an action of the poset category $\poset{n}$, called $\poset{n}$-modules.  We exhibit a functor from $\poset{n}$-modules to the category of monads.  The resulting monads act on categories of functors whose codomain is equipped with a suitable notion of homotopy colimits.  In the final section of the paper, we demonstrate the monads used to construct McCarthy's dual calculus as an example of a monad arising from a $\poset{n}$-module.  This confirms that our dualization of the Hess-Johnson program generalizes McCarthy's dual calculus, and serves as a proof of concept for further development of this program.
\end{abstract}

\noindent \keywords{monads, cubical diagrams, homotopy colimits, module categories, functor calculus}

\noindent \MSCsubj{55P65, 18C15, 16D90}
\section{Introduction}

Goodwillie's calculus of homotopy functors  \cite{G90, G91, G03} has played an important role in the establishment of many significant results and insights in $K$-theory and homotopy theory.  It has also inspired the development of many new functor calculi in a wide range of contexts in algebra, homotopy theory, and geometric topology.   Each of these functor calculi is characterized by the existence of a ``Taylor tower'' of ``degree $n$'' functors that approximate a given functor $F$, in a manner analogous to the Taylor series for functions, where the definition of ``degree $n$'' varies from one flavor of functor calculus to the next. 

The  abelian functor calculus of Johnson and McCarthy \cite{JM04}, designed for functors of abelian categories, has its roots in classical constructions of Dold and Puppe  and of Eilenberg and Mac Lane.  The methods used to define the abelian functor calculus were adapted by Bauer, Johnson and McCarthy in \cite{BJM15} to define the discrete calculus for functors of simplicial model categories.  Under certain hypotheses, the discrete calculus agrees with Goodwillie's calculus for homotopy functors, thereby serving as a bridge  between the algebraic abelian functor calculus and the topological calculus of homotopy functors.

The discrete and abelian functor calculi are further distinguished by the means by which they are constructed.  Underlying each stage of the Taylor tower for both calculi is a comonad that is simultaneously used to define what it means for a functor to be of ``degree $n$'' in that particular calculus and to construct, through a standard bar and homotopy colimit constructions,  a ``degree $n$'' approximation to an arbitrary functor.  The nature of these constructions suggests  that these methods can be generalized, to develop new calculi from sequences of compatible comonads, and dualized, to create a framework for building cocalculi from sequences of compatible monads.  A general program for constructing calculi using this approach, is being carried out by the third and fourth authors of this work \cite{HJ21}.  

On the dual side, McCarthy's dual calculus provides an  example of a cocalculus defined via monads that is dual to the discrete calculus of \cite{BJM15}. Comparisons between the discrete calculus and its dual  led to interesting results about  obstructions  to Taylor towers being the product of their homogeneous layers. These results subsequently led to the classification of various types of functors (see, e.g. \cite{McCarthy2001} \cite{Chaoha} \cite{AC11} \cite{Ching10}).  There are also recent applications of dual calculus to  stable cohomology operations \cite{GlasmanLawson} and operads in the $\infty$-category of $T(n)$-local spectra \cite{Heuts}, further illustrating its utility.

This paper is the first step in the process of developing a general program for defining cocalculi via sequences of compatible monads, dual to the program of Hess and Johnson, which will include McCarthy's dual calculus as an example.   

The comonads underlying the abelian and discrete calculi arise as the iterated homotopy fibers of certain {\it $n$-cubical diagrams}, that is, functors out of poset categories $\poset{n}$ whose objects are subsets of  sets $\n{n}:=\{1,\dots,n\}$ where $n$ can be any natural number.  That one can construct the comonads used in the discrete and abelian functor calculi in this way is a consequence of the fact that each cubical diagram arises from a {\it $\poset{n}$-module}, i.e., from a category equipped with an action of $\poset {n}$, where we consider $\poset{n}$ as a symmetric monoidal category with union as monoidal product. In the case of dual calculus, there are monads underlying the constructions which  arise as  iterated cofibers of  cubical diagrams.  The cubical diagrams involved in dual calculus have different features, which indicate that an interpretation of these diagrams as $\poset{n}$-modules require a different monoidal structure.

The goal of this paper is to show that this process of producing comonads  from $\poset{n}$-modules can be dualized to produce monads from $\poset{n}$-modules, when considering $\poset{n}$ equipped with the symmetric monoidal structure given by intersection instead of union. Our main result, which appears as Theorem \ref{theorem:pnmodstomonads}, establishes the existence of a functor from the category of $\poset{n}$-modules to the category of monads and can be summarized as follows.

\smallskip
\noindent{\bf Theorem \ref{theorem:pnmodstomonads}.} {\it For a category $\modcatA$ with homotopy colimits, there exists a contravariant functor from the category of $\poset{n}$-modules to the category of monads.  For a $\poset{n}$-module on a category $\catA$, the corresponding monad acts on the functor category $\fun{\catA}{\modcatA}$.}
\smallskip

  The monads in the target of this functor are obtained by taking the iterated homotopy cofiber of certain cubical diagrams.  In Proposition \ref{thm:cocrossmonad} we show that the monad obtained from a specific familiar $\poset{n}$-module recovers McCarthy's $n$th cocross effect functor evaluated on the diagonal.  This confirms that our dualization of \cite{HJ21} generalizes the constructions in \cite{McCarthy2001}, as desired.

Given the role of $\poset{n}$-modules as a source of monads, we also provide characterizations of $\poset{n}$-modules in terms of other, perhaps more familiar, categorical constructions in Proposition \ref{prop:equivmodulestructure}. The compatibility condition in item (3) is specified in the full statement of the proposition in section 3.  

\smallskip
\noindent{\bf Proposition \ref{prop:equivmodulestructure}.}
{\it The following structures on a small category $\catA$ are equivalent:
\begin{enumerate}
    \item a $\poset{n}$-module structure on $\catA$, 
    \item a set of $n$ strict idempotent commuting comonads on $\catA$, and  
    \item a set of $n$ strict compatible coreflective subcategories of $\catA$.
\end{enumerate}}

This proposition suggests that it may be possible to first identify a reflective subcategory of a category such as $\fun{\catA}{\modcatA}$ as a candidate for the subcategory of degree $n$ functors, and then to construct the associated $\poset{n}$-modules and monads directly to fit the desired notion of degree $n$.  

Since our method for constructing monads from $\poset{n}$-modules recovers McCarthy's cocross effects, it is natural to ask whether or not it can be extended to produce cotowers of functors recovering the dual calculus cotower and to dualize the functor calculi of \cite{HJ21}.  To do so requires tools for comparing monads arising from $\poset{n}$-modules to those arising from $\poset{m}$-modules, for $n\neq m$.  In Proposition 5.4 we establish conditions that ensure the existence of such comparisons, which can be summarized as follows.  

\smallskip
\noindent{\bf Proposition 5.4} 
 {\it Any strict monoidal functor $g:\poset{m}\to \poset{n}$ (i.e., preserving intersections and the empty set) induces a functor $g^*$ from the category of $~\poset{n}$-modules to that of $\poset{m}$-modules.  

Moreover, for any $\poset{n}$-module structure $\theta$ on $\catA$, there is a natural morphism of monads on $\fun{\catA}{\modcatA}$, from the monad associated to the $\poset{m}$-module $g^\ast(\catA,\theta)$ to the monad associated to the $\poset{n}$-module $(\catA,\theta)$.}
\smallskip

In Theorem 6.3 we show that for a surjection from a set with $n$ elements to a set with $m$ elements, Proposition 5.4 produces a natural transformation from the $m$-th cocross effect (evaluated on the diagonal) to the $n$-th cocross effect (also evaluated on the diagonal), which underlies a morphism of the associated monads.  We aim to use this in the development of a full theory of ``functor cocalculi'' in future work.

\subsection{Organization} The paper is organized as follows.  In \textbf{section 2}, we review the definitions of monads, comonads and morphisms between monads, allowing us to define the category of monads. We further discuss ways in which monads and comonads can arise from adjunctions.  We use \textbf{section 3} to introduce $\poset{n}$-modules and establish the equivalence between $\poset{n}$-modules, strict idempotent comonads, and strict coreflective subcategories.  In \textbf{section 4}, we identify the properties for a choice of model for homotopy colimits required for our main construction, which we construct in \textbf{section 5}.  This construction transforms a $\poset{n}$-module into a monad and yields the functor between the category of $\poset{n}$-modules and the category of monads, which is the subject of Theorem \ref{theorem:pnmodstomonads}.  Finally, in \textbf{section 6}, we review the monads of McCarthy's dual calculus and show that they arise from our framework for constructing monads from cubical diagrams and homotopy colimits. 

\subsection{Acknowledgments}

The authors thank the Hausdorff Research Institute for Mathematics for hosting the fourth Women in Topology workshop, which brought us together for a week during which many of the results in this paper were obtained.  We  thank Jelena Grbi\'c, Ang\'elica Osorno, Vesna Stojanoska and Inna Zakharevich for organizing this workshop and working diligently to ensure that all participants had funding to attend. We also thank the Association for Women in Mathematics for providing travel support for several workshop participants.
  The first author would like to thank NSERC for supporting her research. The second author acknowledges the support of ICERM at Brown University, where part of this work was done. 
 The fourth author was partially supported by the Union College Faculty Research Fund.

\section{Monads and Comonads}

In this section we  recall the  definitions of monads and comonads, as well as morphisms between monads to form the category $\monadcat$.  We also describe two important techniques for creating monads from adjunctions, and provide examples of these techniques that will be of use to us in later sections.  

Throughout, we will need to compose natural transformations and functors to obtain a new natural transformation, as follows. Given a natural transformation $\alpha:F\Rightarrow G$ and functors $H$ and $K$ as displayed in
\begin{center}
    \begin{tikzcd}
        \catA
            \arrow[r, "H"]
    &   \catB
            \arrow[r, bend left=40, "F", ""{name=F}]
            \arrow[r, bend right=40, swap, "G", ""{name=G}]
    &   \catC
            \arrow[r, "K"]
    &   \catD
   \arrow[Rightarrow, "\,\alpha", shorten=2mm, from=F, to=G]
     \end{tikzcd}
\end{center}
we define the \textit{whiskered} composite of $\alpha$ with $H$ and $K$ to be the natural transformation $K\whisk\alpha\whisk H:KFH\Rightarrow KGH$ given by $(K\alpha H)_a=K\alpha_{H(a)}$. 

\begin{definition}\label{def:monad}
    A \emph{monad} on a category $\catA$ is a triple $\monadA=\monadAtriple$, consisting of an endofunctor $T:\catA\to\catA$ together with two natural transformations $\mult: T T \Rightarrow T$ and $\eta:\id{\catA}\Rightarrow T$, called the \textit{multiplication} and \textit{unit} respectively, such that the following two diagrams commute:
   
\begin{center}
        \begin{tikzcd}[row sep=1.8em, column sep=3em]
                T T  T
                    \arrow[r, " T\whisk\mult"]
                    \arrow[d, swap, "\mult\whisk T"]
            &   T T
                    \arrow[d, "\mult"]
            &
            &   T
                    \arrow[rd, swap, "\id{T}"]
                    \arrow[r, "\eta \whisk T"]
            &   T T
                    
                    \arrow[d, "\mult" description]
            &   T
                    \arrow[ld, "\id{T}"]
                    \arrow[l, swap, "T\whisk\eta"]
            \\  T T
                    \arrow[r, swap, "\mult"]
            &   T
            &&& T.
            &
                
        \end{tikzcd}
    \end{center}
    We say that a monad $\monadA=\monadAtriple$ is a \textit{strict idempotent} if $\mult$ is in fact the identity. 

The category of monads, denoted by $\monadcat$, has as its objects pairs $(\catA, \monadAtriple)$, where $\monadAtriple$ is a monad on $\catA$. A morphism $(\catA,(T_\catA,\mu_\catA,\eta_\catA))\to (\catB,(T_\catB,\mu_\catB,\eta_\catB))$ in $\monadcat$ is a pair $(F,\alpha )$, consisting of a functor $F:\catA\rightarrow\catB$ together with a natural transformation $\alpha:T_\catB  F\Rightarrow F T_\catA$ such that the following diagrams commute:
\begin{center}\label{e:monadmorph}
        \begin{tikzcd}[column sep=3em]
                T_\catB T_\catB  F
                    \arrow[r, "T_\catB\whisk\alpha\;"]
                    \arrow[d, swap, "\mu_\catB\whisk F"]
            &   T_\catB F T_\catA
                    \arrow[r, "\alpha \whisk T_\catA"]
            &   F T_\catA T_\catA
                    \arrow[d, "F\whisk\mu_\catA"]
            &  F
                    \arrow[rd, "F \whisk\eta_\catA"]
                    \arrow[d, swap,"\eta_\catB\whisk F"]
            &   
            \\  T_\catB F
                    \arrow[rr, swap, "\alpha"]
            &&  F T_\catA

            & T_\catB F
                \arrow[r, swap, "\alpha"]
            & F T_\catA.
            
        \end{tikzcd}
    \end{center}
     
The category $\monadcat_\catA$ is the full subcategory of $\monadcat$ whose objects are monads on $\catA$. 
\end{definition} 
For ease of notation, we will often omit the multiplication and unit from the notation of a monad, and simply denote it by the endofunctor. 

\begin{remark}\label{rem:monoid}
    A monad $\monadA$ on a category $\catA$ is equivalent to a monoid in the category $\End{\catA}$ of endofunctors on $\catA$, where the monoidal product is given by composition.
\end{remark}

\begin{example}\label{ex:identity}
    The simplest example of a monad on a category $\catA$ is the identity functor $\id{\catA}$, with multiplication and unit given by the identity morphisms on the objects. 
\end{example}

Dually, one can define a comonad on a category $\catA$ as being a monad on $\catA^{op}$, which is equivalent to a comonoid in $\End{\catA}$, and which more explicitly can be defined  as follows. 

\begin{definition}\label{defcomonad}
    A \emph{comonad} on a category $\catA$ is a triple $\comonadA =\comonadAtriple$ consisting of an endofunctor $K:\catA\to\catA$ together with two natural transformations $\Delta:K\Rightarrow K K$ and $\epsilon:K\Rightarrow \id{\catA}$, called the \textit{comultiplication} and \textit{counit} respectively, such that the duals of the diagrams in Definition \ref{def:monad} commutes.

\end{definition}

Adjunctions are an important source of monads and comonads. For ease of notation we will throughout denote adjunctions by $L\dashv R:\catA\to\catB$, for $L:\catA\to\catB$ left adjoint to $R:\catB\to\catA$.

\begin{remark} \begin{sloppypar}\tolerance 900
    Let $L\dashv R:\catA\to\catB $ be an adjunction with unit $u:\id{\catA}\Rightarrow R  L$ and counit $\epsilon :L  R\Rightarrow \id{\catB}$. By \cite[Lemma 5.1.3]{Riehl2017}, this gives rise to a monad $(R  L,R\whisk \epsilon \whisk L, u)$ on $\catA$, with the whiskered counit $R \whisk \epsilon \whisk L$ serving as the multiplication. Furthermore, the adjunction $L\dashv R$ similarly gives rise to a comonad 
    $(L  R,L\whisk u\whisk R,\epsilon)$ on $\catA$.
    \end{sloppypar}
\end{remark} 

The following examples of  adjunctions and their induced monads will be used in subsequent sections.  

\begin{example}\label{prodmonad}
        For $n$ a positive integer and $\catA$ some category, we denote by $\catA^{\times n}$ the product category whose objects and morphisms are $n$-tuples of objects and morphisms, respectively, in $\catA$. Let $\Delta: \catA\rightarrow \catA^{\times n}$ be the diagonal functor, which takes an object $a$ to the $n$-tuple $(a, \dots, a)$.  Assuming $\catA$ has finite products, we can define the product  functor  $\sqcap:\catA^{\times n}\rightarrow \catA$ as the functor that takes an object $(a_1, \dots, a_n)$ to the product $a_1\times \dots \times a_n\in\ob{\catA}$. The universal property for products guarantees that $\sqcap$ and $\Delta$ are an adjoint pair, with $\sqcap$ being the right adjoint.  Hence, the functor sending $a\in\ob{\catA}$ to $a^{\times n}=a\times \dots \times a$, the  product of $n$ copies of $a$, underlies a monad on $\catA$.
\end{example}

A more general way of obtaining a monad on $\catA$, which will be relevant later in this article, comes from combining an adjunction and monad, as follows. 

\begin{lemma}\label{lem:adjmonad}
For any adjunction $L\dashv R:\catA\to\catB$ and any monad $\monadA=\monadAtriple$ on $\catB$, the composite $R  T  L$ underlies a monad on $\catA$, which we denote $R\monadA L$.
\end{lemma}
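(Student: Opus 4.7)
The plan is to define explicit unit and multiplication natural transformations for $RTL$ and verify the monad axioms. Conceptually, the cleanest way to view the result is through Remark \ref{rem:monoid}: the assignment $F \mapsto RFL$ carries a lax monoidal structure with respect to functor composition on the endofunctor categories, and lax monoidal functors preserve monoids, hence take monads on $\catB$ to monads on $\catA$. Writing out the structure maps of this would-be lax monoidal functor explicitly makes the argument completely transparent.

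Let $u\colon \id{\catA}\Rightarrow RL$ and $\epsilon\colon LR\Rightarrow \id{\catB}$ denote the unit and counit of the adjunction $L\dashv R$. First, I would define the unit of the new monad as
\[
\eta' := (R\whisk \eta \whisk L)\circ u \colon \id{\catA} \Longrightarrow RL \Longrightarrow RTL.
\]
For the multiplication, I would use $\epsilon$ to ``collapse'' the middle copy of $LR$ appearing in $RTLRTL$, and then apply the multiplication of $\monadA$, obtaining
\[
\mu' := (R\whisk \mu \whisk L)\circ (RT \whisk \epsilon \whisk TL)\colon RTLRTL \Longrightarrow RTTL \Longrightarrow RTL.
\]

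With these definitions in place, both unit axioms reduce quickly to the triangle identities for $L\dashv R$ (namely $\epsilon\whisk L \circ L\whisk u = \id{L}$ and $R\whisk \epsilon \circ u\whisk R = \id{R}$) combined with the unit axioms for $\monadA$: the adjunction unit inserted by $\eta'$ is immediately cancelled by the counit appearing in $\mu'$ via a triangle identity, leaving only the monad unit $\eta$ of $\monadA$, which is then annihilated by $\mu$.

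Associativity will be the main obstacle, since the relevant hexagon involves three copies of $RTL$ and hence two copies of $\epsilon$ and two copies of $\mu$. Its commutativity decomposes into two kinds of inner cells: naturality squares for $\epsilon$, allowing the whiskered $\epsilon$'s to be slid past the intervening copies of $T$, and the single square encoding the associativity of $\mu$. No new ingredient is needed beyond careful bookkeeping of which factor of $L$, $R$, or $T$ is being acted on at each stage; I expect the diagram to fit together cleanly once drawn. Finally, the strictness of the monoidal product in $\End{\catA}$ obviates any coherence subtleties beyond those already handled by the triangle identities.
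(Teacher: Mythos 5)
Your proof is correct, and your structure maps
\[
\eta' = (R\whisk \eta \whisk L)\circ u, \qquad \mu' = (R\whisk \mu \whisk L)\circ (RT\whisk \epsilon\whisk TL)
\]
define exactly the same monad as the paper's, but you arrive at it by a genuinely different route. The paper does not verify any axioms directly: it factors $T$ as $R'L'$ through its Eilenberg--Moore or Kleisli adjunction, composes adjunctions to get $L'L\dashv RR'$, and invokes the classical fact that an adjunction induces a monad; unwinding the unit and counit of that composite adjunction recovers precisely your $\eta'$ and $\mu'$. Your approach — explicit structure maps checked against the triangle identities, the interchange law, and the unit/associativity axioms of $\monadA$ (equivalently, the observation that conjugation $F\mapsto RFL$ is a lax monoidal functor $\End{\catB}\to\End{\catA}$, so it preserves monoids in the sense of Remark \ref{rem:monoid}) — is longer but self-contained and has a concrete payoff here: the paper later needs the explicit form of the multiplication on $RTL$ (the proof of Lemma \ref{lem:conjunctionadjunction} identifies the top and bottom rows of its large diagram as "the multiplication for $R'T'L'$ and $RTL$ as in Lemma \ref{lem:adjmonad}", which is exactly your composite $(R\whisk\mu\whisk L)\circ(RT\whisk\epsilon\whisk TL)$), so your version supplies data the paper's citation-based proof leaves implicit. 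Your sketch of the unit axioms (triangle identity cancels $u$ against $\epsilon$, then the monad unit axiom finishes) and of associativity (two naturality/interchange squares for the whiskered $\epsilon$'s plus one associativity square for $\mu$) is an accurate account of the diagrams involved.
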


\begin{proof}
This is a direct consequence of two classical results about monads and adjunctions. Any monad $\monadA$ arises from an adjunction, via either an Eilenberg-Moore category or a Kleisli category, so $T=R'   L'$ for some adjoint pair $L'\dashv R'.$ The composite $L'  L$ is  left adjoint to the composite $R   R'$, so that $R  R'  L'  L=R  T  L$ underlies a monad on $\catA$. See \cite[4.4.4, 5.2.8, 5.2.11]{Riehl2014} for more details.
\end{proof}

\begin{remark}
 As a consequence of Example \ref{prodmonad} and Lemma \ref{lem:adjmonad}, we see that for any category $\catA$ with finite products and  any positive integer $n$, a monad $\monadA=\monadAtriple$ on $\catA^{\times n}$ gives rise to a monad $\sqcap\monadA\Delta$ on $\catA$ with underlying endofunctor $\sqcap   T  \Delta$.  In particular, the monad of Example \ref{prodmonad} arises in this way from the identity monad on $\catA^{\times n}$ described in Example \ref{ex:identity}.    
\end{remark}

In the final section of the paper, we will need to compare morphisms of monads of the sort constructed in Lemma \ref{lem:adjmonad}.  Let $\monadA_\catA= (T,\mu,\eta)$ and $\monadA_\catB= (T',\mu',\eta')$ be monads on categories $\catA$ and $\catB$, respectively.  
Given adjunctions $L\dashv R:\catA\to \catD$ and $L'\dashv R':\catB\to \catD$, we have a diagram 
\begin{center}
    \begin{tikzcd}[column sep=3em, row sep=3em]

    \catA   
        \arrow[r, "F"]
        \arrow[d, "R"{name=R}, bend left=30]
    & \catB     
        \arrow[d, "R'"{name=Q}, bend left=30]
    \\
    \catD
        \arrow[u, "L"{name=L}, bend left=30]
        \arrow[r, equal]
    &\catD
        \arrow[u, "L'"{name=K}, bend left=30]

        \arrow[phantom, from=L, to=R, "\dashv"]
        \arrow[phantom, from=K, to=Q, "\dashv"].
    \end{tikzcd}
\end{center}

The next lemma provides conditions under which the identity map $\id{\catD}$ is a monad morphism when $F$, together with a natural transformation $\alpha$, is a monad morphism.  

\begin{lemma}\label{lem:conjunctionadjunction} Let  $\monadA_\catA= (T,\mu,\eta)$ and $\monadA_\catB= (T',\mu',\eta')$ be monads on categories $\catA$ and $\catB$, respectively, and let $(F, \alpha):(\catA,\monadA_\catA)\to (\catB, \monadA_\catB)$ be a monad morphism. Let 
     $L\dashv R:\catD\to\catA $ be an adjunction with unit $u:\id{\catD}\Rightarrow R  L$ and counit $\epsilon :L  R\Rightarrow \id{\catA}$, and
   $L'\dashv R':\catD\to\catB $ be an adjunction with unit $u':\id{\catD}\Rightarrow R'  L'$ and counit $\epsilon' :L'  R'\Rightarrow \id{\catB}$.

Let
\[ \tau_L:L' \Rightarrow FL\quad\text{and}\quad \tau_R:R' F\Rightarrow R\]
be natural transformations and 
 write
\begin{align*}
    \tau_R\cdot \tau_L &: R'L' \xRightarrow{R\whisk \tau_L} 
        R'FL
            \xRightarrow{\tau_R\whisk L}
        RL
    \\
    \tau_L\cdot\tau_R &: L'R'F
        \xRightarrow{L'\whisk \tau_R}
        L'R
        \xRightarrow{\tau_L\whisk R}
     FLR
\end{align*}
for the respective compositions of natural transformations.

If $(F\whisk \epsilon)(\tau_L\cdot \tau_R) = \epsilon'\whisk F$ and $(\tau_L\cdot \tau_R) u'=u$, then there is a monad morphism from $R'\monadA_\catB L'$ to $R\monadA_\catA L$, with underlying natural transformation $\beta$ given by the composite
$$R'T'L' \xrightarrow{R' T'\whisk \tau_L} R' T' FL \xrightarrow{R'\whisk \alpha\whisk L} R' F T L \xrightarrow{\tau_R\whisk TL} R TL.$$
\end{lemma}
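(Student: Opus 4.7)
The plan is to verify that $\beta$, together with the identity functor on $\catD$, satisfies the two commutative diagrams of Definition \ref{def:monad} characterising a morphism of monads, using the explicit monad structures on $R\monadA_\catA L$ and $R'\monadA_\catB L'$ provided by Lemma \ref{lem:adjmonad}. Explicitly, the unit of $RTL$ is $R\whisk\eta\whisk L\circ u$ and its multiplication is $R\whisk\mu\whisk L\circ R\whisk T\whisk\epsilon\whisk T\whisk L$, and analogously for $R'T'L'$ using the primed data.

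For the unit compatibility, I would expand $\beta\circ R'\whisk \eta'\whisk L'\circ u'$ by inserting the definition of $\beta$. Four successive rewrites then collapse this to $R\whisk\eta\whisk L\circ u$: (i) naturality of $\eta'$ against $\tau_L$ rewrites $R'T'\whisk\tau_L\circ R'\whisk\eta'\whisk L'$ as $R'\whisk\eta'\whisk FL\circ R'\whisk \tau_L$; (ii) the unit axiom for the monad morphism $(F,\alpha)$ collapses $R'\whisk\alpha\whisk L\circ R'\whisk\eta'\whisk FL$ into $R'\whisk F\whisk\eta\whisk L$; (iii) the interchange law applied to $\tau_R$ and $\eta$ replaces $\tau_R\whisk TL\circ R'\whisk F\whisk\eta\whisk L$ by $R\whisk\eta\whisk L\circ \tau_R\whisk L$; and finally (iv) the hypothesis $(\tau_R\cdot\tau_L)\circ u'=u$ absorbs $\tau_R\whisk L\circ R'\whisk\tau_L\circ u'$ into $u$.

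For the multiplication compatibility, I would compare $\beta\circ \mu_{R'T'L'}$ with $\mu_{RTL}\circ(\beta\whisk\beta)$, expanding $\beta\whisk\beta$ via interchange as, say, $(RTL\whisk\beta)\circ (\beta\whisk R'T'L')$. The chase now uses naturality of $\mu'$, $\alpha$, $\tau_L$ and $\tau_R$ to shuffle whiskered copies of $\alpha$, $\tau_L$ and $\tau_R$ into position; the multiplication axiom for $(F,\alpha)$, which rewrites $\alpha\circ\mu'\whisk F$ as $F\whisk\mu\circ\alpha\whisk T\circ T'\whisk\alpha$ after appropriate whiskering by $R'$ on the left and $L$ on the right; and, crucially, the hypothesis $(F\whisk\epsilon)(\tau_L\cdot\tau_R)=\epsilon'\whisk F$, which converts the counit $\epsilon'$ appearing in $\mu_{R'T'L'}$ into the counit $\epsilon$ appearing in $\mu_{RTL}$ once the $\tau_L$ and $\tau_R$ factors have been brought together. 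The unit check is really a warm-up for the main obstacle, the multiplication square, which has many more cells but is entirely mechanical once the correct ordering of rewrites is identified; a clean organization is to draw the full pasting diagram and tile it by naturality squares, the two monad-morphism axioms, and the single distinguished square supplied by the conjugacy-type hypothesis on the $\tau$'s.
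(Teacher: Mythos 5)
Your proposal is correct and follows essentially the same route as the paper: both verify the unit axiom by a short chase using naturality of $\eta'$ and $\eta$, the unit axiom for $(F,\alpha)$, and the hypothesis $(\tau_R\cdot\tau_L)u'=u$, and both verify the multiplication axiom by tiling a large pasting diagram with interchange/naturality squares, the multiplication axiom for $(F,\alpha)$, and one distinguished square supplied by $(F\whisk\epsilon)(\tau_L\cdot\tau_R)=\epsilon'\whisk F$. (You also correctly read the unit hypothesis as involving $\tau_R\cdot\tau_L$ rather than $\tau_L\cdot\tau_R$, which is the only version that typechecks.)
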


\begin{proof}
The proof is a direct verification that the pair $(\id{\catD}, \beta)$ satisfy the diagram used to define a monad morphism in Definition \ref{def:monad}.

To see that the natural transformation $\beta$ preserves multiplication, consider the following diagram.
\begin{center}
    \begin{tikzcd}[column sep=6.8em, row sep=3em]
        R'T'L'R'T'L'
            \arrow[r, "R'T'\whisk \epsilon' \whisk T'L'"]
            \arrow[dd, swap, "R'T'L'R'T'\whisk \tau_L"]
            \arrow[rdd, phantom, "(1)" description]
    &
        R'T'T'L'
            \arrow[r, "R'\whisk \mu'\whisk L'"]
            \arrow[dd, "R'T'T'\whisk \tau_L"]
            \arrow[rdd, phantom, "(2)" description]
    &
        R'T'L'
            \arrow[dd, "R'T'\whisk \tau_L"]
\\\\
        R'T'L'R'T'FL
            \arrow[r, "R'T'\whisk \epsilon'\whisk T'FL"]
            \arrow[d, swap, "R'T'L'R'\whisk \alpha \whisk L"]

    &
        R'T'T'FL
            \arrow[r, "R'\whisk \mu'\whisk FL"]
            \arrow[dd, "R'T'\whisk \alpha\whisk L", ""{name=8, near start}]
            \arrow[rdddd, phantom, "(3)" description]
    &
        R'T'FL
            \arrow[dddd, "R'\whisk \alpha\whisk L"]
\\
        R'T'L'R'FTL
            \arrow[rd, "R'T'\whisk\epsilon' \whisk FTL" description, bend left=20]
            \arrow[d, swap,  "R'T'L'\whisk\tau_R\whisk  TL"]
    &
    &
\\
        R'T'L'RTL
            \arrow[d,swap,  "R'T'\whisk \tau_L\whisk RTL"]
            \arrow[r, phantom, "(7)" description]
    &
        R'T'FTL
            \arrow[dd, "R'\whisk\alpha\whisk TL", ""{name=6, near end}]
    &
\\
        R'T'FLRTL
            \arrow[ur, "R'T'F\whisk \epsilon\whisk TL" description, bend right=20]
            \arrow[d, swap, "R'\whisk \alpha\whisk LRTL"]
    &
    &
\\
        R'FTLRTL
            \arrow[r, "R'FT\whisk \epsilon\whisk TL"]
            \arrow[dd, swap, "\tau_R\whisk TLRTL"]
            \arrow[rdd, phantom, "(5)" description]
    &
        R'FTTL
            \arrow[r, "R'F\whisk\mu\whisk L"]
            \arrow[dd, "\tau_R \whisk TTL"]
            \arrow[rdd, phantom, "(4)" description]
    &
        R'FTL
            \arrow[dd, "\tau_R \whisk TL"]
\\\\
        RTLRTL
            \arrow[r, swap, "RT\whisk \epsilon\whisk TL"]
    &
        RTTL
            \arrow[r, swap, "R\whisk \mu\whisk L"]
    &
        RTL.
    \arrow[phantom, near end, "(8)" description, from=3-1, to=8]
    \arrow[phantom, near end, "(6)" description, from=7-1, to=6]
        
    \end{tikzcd}    
\end{center}
Square $(1)$ commutes because both composites equal $R'T'\whisk \epsilon'\whisk T'\whisk \tau_L$.  Similarly, square (2) commutes because both composites equal $R'\whisk \mu' \whisk \tau_L$.  Squares (4), (5), (6), and (8) commute for analogous reasons. 
The assumption that $\alpha$ is the natural transformation part of a morphism of monads implies that square $(3)$ commutes. Finally the assumed compatibility of $\tau_L, \tau_R, \epsilon$ and $\epsilon'$ implies the commutativity of the last part of the diagram, square $(7)$, hence the diagram commutes.  The composite of the left vertical arrows equales the composite of $R'T'L'\whisk \beta$ with $\beta \whisk RTL$, while the right vertical arrows compose to $\beta$.  The top and bottom compositions correspond exactly the the multiplication for $R'T'L'$ and $RTL$ as in Lemma \ref{lem:adjmonad}.  Thus, $(\id{\catD}, \beta)$ preserves multiplication, as desired.

To prove that  $\beta$ commutes with the units of the monads $R' \monadA_\catB L'$ and $R\monadA_\catA L$, we first observe that it fits into the following commuting diagram.
\begin{center}
    \begin{tikzcd}[column sep=2.5em, row sep=2.5em]
        &
       &
            \id{\catD}
                \arrow[dll, swap, bend right=5, "u'"]
                \arrow[drr, bend left=5, "u"]
        &
        &
    \\
        R' L'
            \arrow[rr, "R' \whisk \tau_L"]
           \arrow[dd, swap, "R'\whisk \eta' \whisk  L'"]
       &
       &
       R' F L
           \arrow[rr, "\tau_R\whisk  L"]
           \arrow[ld, "R' \whisk \eta' \whisk  F  L" description]
           \arrow[rd, "R' F  \whisk \eta \whisk L" description]
       &
       &
       R  L  
           \arrow[dd, "R  \whisk \eta\whisk L"]
   \\
       &
       R' T' F L
           \arrow[rr, "R \whisk \alpha \whisk L"]
       &
       &
       R'  F T L
           \arrow[rd, swap, "\tau_R\whisk  T L"]
       &
   \\
       R' T'  L'
           \arrow[ur, swap, "R' T' \whisk \tau_L"]
           \arrow[rrrr, swap, "\beta"]
       &
       &
       &
       &
       R T  L.
   \end{tikzcd}
\end{center}

The left and right triangles commute by naturality of $\eta'$ and $\eta$, respectively, and the middle triangle because $\alpha$ is a component of a monad morphism from $\monadA_\catB$ to $\monadA_\catA$.  The top triangle commutes by hypothesis, and the bottom square commutes by definition of $\beta$.  The commutativity of this diagram shows that the natural transformation $\beta$ commutes with the units of the monads $R' \monadA_\catB L'$ and $R\monadA_\catA L$ as desired, so we can conclude that the natural transformation $\beta$ indeed underlies a morphism of monads.
\end{proof}

\section{$(\mathcal{P}(n),\cap,\n{n})$-modules}
For $n$ a non-negative integer, let  $\n{n}:=\{1,...,n\}$. We denote by $\poset{n}$ the category with subsets of $\n{n}$ as objects and morphisms given by inclusions,  and by convention we set $\poset {0}:=\{\emptyset\}$, the category with one object and no non-identity morphisms. Henceforth, we consider $\poset{n}$ equipped with the symmetric monoidal structure given by intersection and with unit $\n{n}$, i.e., $(\poset{n},\cap, \n{n}\,)$. In this section we study $\poset{n}$-modules with respect to this monoidal structure. We start by recalling the definition of a module over a symmetric monoidal category, specialized to $(\poset{n},\cap, \n{n}\,)$.

\begin{definition}\label{def:nmodule}
    A \emph{$\poset{n}$-module} consists of a category $\catA$ together with a functor $\theta:\catA\times\poset{n}\to\catA$, which we call the \textit{$\poset{n}$-module structure} on $\catA$, that is both unital and associative, in the following sense:
    \begin{enumerate}
         \item (Unital) $\theta(-,\n{n}\,)=\id{\catA}$, and
        \item (Associative) $\theta\left(\theta(a,\subsetA),\subsetB\right)=\theta(a,\subsetA\cap \subsetB)$ for all  $\subsetA,\subsetB\in\ob{\poset{n}}$ and $a\in\ob{\catA}$.
    \end{enumerate}
    
    A \textit{morphism of $\poset{n}$-modules} between two $\poset{n}$-modules $(\catA,\theta_\catA)$ and $(\catB,\theta_\catB)$ consists of a functor $F:\catA\to\catB$ compatible with the $\poset{n}$-module structures, in the sense that the following diagram commutes:
    \begin{center}
        \begin{tikzcd}[column sep=4.5em]
                \catA\times\poset{n}
                    \arrow[r, "F\times\id{\poset{n}}"]
                    \arrow[d, swap, "\theta_\catA"]
            &   \catB\times\poset{n}
                    \arrow[d,  "\theta_\catB"]
            \\
                \catA
                    \arrow[r, swap, "F"]
            &
                \catB.
        \end{tikzcd}
    \end{center}
    We let $\Mod{\poset{n}}$ denote the category of $\poset{n}$-modules and morphisms between them. 
\end{definition}

The following example will be used in Section 6, which concerns dual functor calculus.
\begin{example} \label{ex:theta^n}
    For $\catA$ a category with an initial object $\initial{\catA}$, we can define a functor
    \begin{align*}
        \theta^n:\catA^{\times n}\times \poset{n}&\longrightarrow \catA^{\times n}\\ 
        \left(\left(a_1,\ldots,a_n\right),\subsetA\right)&\longmapsto \left(b_1,\ldots,b_n\right),\\
    \end{align*}
 where
    \[ 
        b_i=\begin{cases}
            a_i & \text{if }i\in \subsetA\\
            \initial{\catA} &\text{if } i\not\in\subsetA.
        \end{cases}
    \]

For any morphism $\big((f_1, \ldots, f_n), \iota \big):\big((a_1, \ldots, a_n), \subsetA\big)\to \big((a_1',\ldots, a_n'), \subsetB\big)$ in $\catA^{\times n}\times \poset{n}$, there is a morphism  $\theta^n\big((f_1, \ldots, f_n), \iota\big)$ in $\catA^{\times n}$ which is given by $f_i$ in the $i$th coordinate if $i\in \subsetA$, and the unique map from the initial object otherwise.

It is not difficult to check that $\theta^n$ is a functor.

    Since $\theta^n\left((a_1,\ldots,a_n),\n{n}\,\right)=(a_1,\ldots,a_n)$ for any $(a_1,\ldots,a_n)\in\ob{\catA^{\times n}}$, and 
    \[ \theta^n\left(\theta^n\left(\left(a_1,\ldots,a_n\right),\subsetA\right),\subsetB\right)=\theta^n\left(\left(a_1,\ldots,a_n\right),\subsetA\cap\subsetB\right),
    \]
    this functor defines a $\poset{n}$-module structure on $\catA^{\times n}$.
\end{example}

To be able to create and understand large classes of $\poset{n}$-modules, it is helpful to characterize them in terms of \textit{coreflective subcategories} and \textit{comonads}. Since we have chosen to consider $\poset{n}$-modules with respect to the symmetric monoidal structure given by intersection instead of union, the following results are dual to those in \cite{HJ21}. 
We start by recalling the remaining notion necessary to formulate these characterizations.

\begin{definition}
    A \emph{coreflective subcategory} of a category $\catA$ is a full subcategory $\catA_0\subseteq \catA$ whose inclusion into $\catA$ admits a right adjoint $R$, which we call the \textit{colocalisation functor}. The unit of such an adjunction is a natural isomorphism $a\cong Ra$ for any $a\in\ob{\catA}$, and if this is in fact the identity, we say that $\catA_0$ is a \textit{strict coreflective subcategory} of $\catA$. 
\end{definition}

\begin{proposition}\label{prop:equivmodulestructure}
The following structures on a small category $\catA$ are equivalent:
\begin{enumerate}
    \item a $\poset{n}$-module structure on $\catA$, 
    \item a set of $n$ strict idempotent comonads $(\comonadA_1,...,\comonadA_n)$ on $\catA$, with $\comonadA_\imath=(K_\imath,\Delta_\imath, \epsilon_\imath)$, such that their underlying endofunctors commute under composition, i.e., $K_\imath  K_k=K_k  K_\imath$ for all $0\leq \imath,k\leq n$, and  
    \item a set of $n$ strict coreflective subcategories of $\catA$, $\{ R_\imath: \catA\to \catA_\imath \}_{\imath=1}^n$, such that $R_\imath|_{\catA_k}  R_k = R_k|_{\catA_\imath}   R_\imath$ for every $1\leq \imath, k\leq n$.
\end{enumerate}
\end{proposition}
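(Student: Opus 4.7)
The plan is to prove the equivalence by constructing mutually inverse correspondences, most conveniently via the cycle $(1) \Rightarrow (2) \Rightarrow (3) \Rightarrow (1)$.

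For $(1) \Rightarrow (2)$, given a $\poset{n}$-module structure $\theta$ on $\catA$, I would, for each $\imath \in \{1,\ldots,n\}$, set $K_\imath := \theta\bigl(-,\n{n}\setminus\{\imath\}\bigr)$. The counit $\epsilon_\imath : K_\imath \Rightarrow \id{\catA}$ is induced by applying $\theta(-,-)$ to the inclusion $\n{n}\setminus\{\imath\}\hookrightarrow \n{n}$ and invoking unitality. Associativity of $\theta$ forces $K_\imath K_\imath = \theta\bigl(-,(\n{n}\setminus\{\imath\})\cap(\n{n}\setminus\{\imath\})\bigr) = K_\imath$, so one can take $\Delta_\imath = \id{K_\imath}$, producing a strict idempotent comonad. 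Commutativity $K_\imath K_k = K_k K_\imath$ follows from the symmetry of $\cap$, as both composites equal $\theta(-,\n{n}\setminus\{\imath,k\})$. The counit and coassociativity axioms of a comonad hold trivially since $\Delta_\imath$ is an identity.

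For $(2) \Rightarrow (3)$, given strict idempotent commuting comonads $\comonadA_\imath = (K_\imath,\Delta_\imath,\epsilon_\imath)$, I would let $\catA_\imath$ be the full subcategory of $\catA$ spanned by those $a$ with $K_\imath(a) = a$. Idempotency ensures $K_\imath$ factors through $\catA_\imath$, yielding $R_\imath : \catA \to \catA_\imath$; the counit $\epsilon_\imath$ provides the natural transformation exhibiting $R_\imath$ as right adjoint to the inclusion, and the strict idempotency makes the unit of this adjunction the identity, so $\catA_\imath$ is strict coreflective. The compatibility $R_\imath|_{\catA_k} R_k = R_k|_{\catA_\imath} R_\imath$ is precisely $K_\imath K_k = K_k K_\imath$ restricted to the appropriate subcategory.

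For $(3) \Rightarrow (1)$, given strict colocalizations $\{R_\imath\}_{\imath=1}^n$ whose underlying endofunctors on $\catA$ (again denoted $R_\imath$) pairwise commute, I would define $\theta(a,\subsetA) := R_{\imath_1}\cdots R_{\imath_k}(a)$ where $\n{n}\setminus\subsetA = \{\imath_1,\ldots,\imath_k\}$. This is independent of the ordering by commutativity, and functorial in $\poset{n}$ by using the counits $R_\imath(a)\to a$ on inclusions. Unitality is immediate (empty composition), and associativity reduces, after using commutativity to reorder, to the idempotency $R_\imath R_\imath = R_\imath$, which itself follows from strict coreflectivity.

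The final step of the plan is to verify that these three constructions are mutually inverse as assignments of structure. Starting from $\theta$, applying $(1)\Rightarrow(2)$ then $(2)\Rightarrow(3)$ then $(3)\Rightarrow(1)$ gives back $\theta(a,\subsetA) = \theta\bigl(\cdots\theta(a,\n{n}\setminus\{\imath_1\})\cdots,\n{n}\setminus\{\imath_k\}\bigr) = \theta(a,\subsetA)$ by associativity, and similarly for the other two round trips. The main obstacle, and the point where the hypothesis is used nontrivially at every step, will be keeping all axioms and round-trip identifications as strict equalities rather than merely natural isomorphisms; this is exactly what the words ``strict'' in items (2) and (3) and the strict associativity/unitality in Definition~\ref{def:nmodule} are chosen to make possible.
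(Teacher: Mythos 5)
Your proposal is correct and follows essentially the same route as the paper: the key constructions ($K_\imath := \theta(-,\n{n}\setminus\{\imath\})$ with identity comultiplication and counit induced by the inclusion $\n{n}\setminus\{\imath\}\hookrightarrow\n{n}$, and conversely $\theta(a,\subsetA)$ as the composite of the colocalizations indexed by the complement of $\subsetA$) are exactly those in the paper's proof. The only organizational difference is that the paper handles the equivalence of (2) and (3) by citing Gabriel--Zisman rather than sketching it, and argues (1)$\Leftrightarrow$(2) as a chain of equivalent conditions on the transpose $\tr{\theta}$ rather than as a cycle; the substance is identical.
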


\begin{proof}
    Properties $2$ and $3$ are equivalent by \cite[Proposition 1.3]{GabrielZisman}, hence we prove only that properties $1$ and $2$ are equivalent. 
    We first note that by taking the transpose, the existence of a functor $\theta:\catA\times\poset{n}\to \catA$ is equivalent to the existence of an $n$-cube of endofunctors $\tr{\theta}:\poset{n}\to\End{\catA}$. Furthermore, the unit and associativity properties required for $\theta$ to be a $\poset{n}$-module structure on $\catA$, are equivalent to assuming that
    \[\tr{\theta}(\n{n})=\id{\catA} \quad\text{and}\quad \tr{\theta}(\subsetA)  \circ\tr{\theta} (\subsetB)=\tr{\theta}(\subsetA\cap \subsetB) =\tr{\theta}(\subsetB)  \circ \tr{\theta}(\subsetA)\]
    for all $\subsetA,\subsetB\in\ob{\poset{n}}$. In particular, note that this will imply that $\tr{\theta}(\subsetA)  \tr{\theta}\circ(\subsetA)=\tr{\theta}(\subsetA)$ for all $\subsetA\in\ob{\poset{n}}$.

    Before proceeding with the argument, we fix the following notation: $\wo{\imath}:=\n{n}\smallsetminus\{\imath\}$ for $1\leq \imath\leq n$ and $\comp{\subsetA}:=\n{n}\smallsetminus\subsetA$ for any $\subsetA\in\ob{\poset{n}}$. We see that the conditions on the transpose $\tr{\theta}$ above are furthermore equivalent to the following requirements.

    \begin{enumerate}
         \item Each $\tr{\theta}(\subsetA)$ is idempotent, i.e., $\tr{\theta}(\subsetA)\circ  \tr{\theta}(\subsetA)=\tr{\theta}(\subsetA)$, and these functors are equipped with a natural transformation
        \[\epsilon_\subsetA=\tr{\theta}(\subsetA\hookrightarrow \n{n}\,):\tr{\theta}(\subsetA)\Rightarrow \id{\catA}.\]
        \item For any $\subsetA\in\ob{\poset{n}}$ with $\comp{\subsetA}=\{\jmath_1,...,\jmath_k\}$, 
        \[\tr{\theta}(\subsetA)= \tr{\theta}\left(\,\wo{\jmath_1}\,\right) \circ \cdots \circ  \tr{\theta}\left(\,\wo{\jmath_k}\,\right).\]
        \item For all $1\leq \imath,\jmath\leq n$,
        \[\tr{\theta}\left(\,\wo{\imath}\,\right) \circ \tr{\theta}\left(\,\wo{\jmath}\,\right)=\tr{\theta}\left(\,\wo{\jmath}\,\right)\circ  \tr{\theta}\left(\,\wo{\imath}\,\right).\]
    
    \end{enumerate}
    We claim that these conditions are furthermore equivalent to the existence of a collection of $n$ strict idempotent comonads with commuting underlying endofunctors. It is clear that each $\tr{\theta}(\subsetA)$ is such a comonad, hence it is sufficient to show that any such collection $(\comonadA_1,...,\comonadA_n)$ of $n$ comonads on $\catA$, with $\comonadA_\jmath=(K_\jmath,\Delta_\jmath,\epsilon_\jmath)$ gives rise to an $n$-cube of endofunctors $\varphi:\poset{n}\to\End{\catA}$ satisfying the three desired  properties. 
    
    We define this $\varphi$ by
    \begin{align*}
        \varphi:\poset{n}&\longrightarrow \End{\catA}\\
        \subsetA=\n{n}\smallsetminus\left\{\jmath_1,...,\jmath_k\right\}  & \mapsto K_{\jmath_1}\circ \cdots \circ  K_{\jmath_k}.
    \end{align*}
Properties $1$ and $3$ follow by the assumptions that each comonad $\comonadA_\jmath$ is strict idempotent with commuting underlying endofunctors and noting that the composition
\[\epsilon_\subsetA:=\epsilon_{\jmath_1} \cdots  \epsilon_{\jmath_k}=\varphi\left(\subsetA\hookrightarrow \n{n}\,\right):\varphi(\subsetA)\Rightarrow\id{\catA}\]
gives the desired natural transformation. 

Finally, property $2$ follows by observing that since $\varphi\left(\,\wo{\jmath}\,\right)=K_\jmath$, 
\begin{align*}
    \varphi(\subsetA)&=\varphi\left(\,\n{n}\smallsetminus\left\{\jmath_1,...,\jmath_k\right\}\right)\\
    & =K_{\jmath_1}\circ \cdots\circ   K_{\jmath_k}\\
    &= \varphi\left(\,\wo{\jmath_1}\,\right)\circ \cdots\circ   \varphi\left(\,\wo{\jmath_k}\,\right).
\end{align*}
    
\end{proof}
We conclude this section by showing how Example \ref{ex:theta^n} fits into a larger collection of $\poset{n}-$module structures determined by coreflective subcategories of a category $\catA$.
\begin{example}
    Let $\catA$ be a small category and $\mathbf A=\{R_\jmath:\catA\to \catA_\jmath\}^n_{\jmath=1}$ an ordered set of strict coreflective subcategories $\catA_\jmath$ of $\catA$, with $R_\jmath$ the colocalization functors and $\iota_\jmath$ the inclusions.
    
For any $1\leq \jmath\leq n$,  let $\catA^{\times n}_{\wo{\jmath}}$ denote $\catA_1\times\cdots \times \catA_{\jmath -1}\times\catA\times\catA_{\jmath +1}\times\cdots \times \catA_n$, and let
\begin{align*}
    R_{\wo{\jmath}}:=R_1\times\cdots \times R_{\jmath -1}\times \id{\catA}\times R_{\jmath +1}\times \cdots \times R_n:  \catA^{\times n}\to \catA^{\times n}_{\wo{\jmath}}.   
\end{align*}
Then $\left\{R_{\wo{\jmath}}:\catA^{\times n}\to \catA^{\times n}_{\wo{\jmath}}\right\}_{\jmath=1}^n$ is a set of $n$ strict coreflective subcategories of $\catA^{\times n}$, satisfying property 3 of Proposition \ref{prop:equivmodulestructure} and therefore corresponds to a $\poset{n}-$module structure on $\catA^{\times n}$.  This module structure is given by
\begin{align*}
        \theta_{\mathbf A}:\catA^{\times n} \times \poset{n} &\rightarrow \catA^{\times n}\\
        \left(\left(a_1,\ldots,a_n\right),\subsetA\right)&\mapsto \left(b_1,\ldots,b_n\right)\\
    \end{align*}
    where
    \[
    b_\jmath=\begin{cases}
        a_\jmath & \text{if }\jmath\in \subsetA\\
        R_\jmath(a_\jmath) & \text{if } \jmath\not\in \subsetA.
    \end{cases}
    \]
For all $\subsetA\subset \subsetB$, there is a morphism in $\catA^{\times n}$
\[\theta_{\mathbf A}\big((a_1,...,a_n),\subsetA \big) \to \theta_{\mathbf A} \big((a_1,...,a_n),\subsetB\big)\]
which in the $\jmath$th coordinate is given by the counit map $R_\jmath(a_\jmath)\to a_\jmath$ if $\jmath\in\comp{ \subsetA}\cap \subsetB$ and by the identity otherwise.

We can equip the set of strict coreflective subcategories of $\catA$, denoted $\text{CoRefl}(\catA)$, with the following poset structure:
\[
\left(\iota_1\dashv R_1:\catA_1\to\catA \right)<\left( \iota_2\dashv R_2 :\catA_2\to\catA\right)
\]
if and only if there exists a natural transformation $\sigma:\iota_1 R_1\Rightarrow \iota_2 R_2$. 

 Let $\text{CoRefl}_n(\catA)$ denote the set of ordered sets of $n$ strict coreflective subcategories of $\catA$. This set admits a poset structure derived from the on $\text{CoRefl}(\catA)$ as follows:
\[\mathbf A=\left\lbrace \iota_\jmath^{} \dashv R_\jmath:\catA_\jmath\to\catA\right\rbrace_{\jmath =1}^n < \mathbf A'=\left\lbrace\iota_\jmath' \dashv R_\jmath':\catA'_\jmath\to\catA\right\rbrace_{\jmath=1}^n  \]
if and only if
\[\big(\iota_\jmath\dashv R_\jmath:\catA_\jmath\to\catA\big)<\big(\iota_\jmath'\dashv R_\jmath':\catA_\jmath'\to\catA\big)\;\;\text{ for all } 1\leq \jmath \leq n.\] 
By considering $\text{CoRefl}_n(\catA)$ as a category using this poset structure, we can extend the construction of $\theta_{\mathbf A}$ to a functor 
\[
\theta_{(-)}:\text{CoRefl}_n(\catA)\rightarrow\Mod{\poset{n}},
\]
which is given on morphisms as follows: For an inequality $\mathbf A<\mathbf A'$ realized by natural transformations $\left\lbrace\sigma_\jmath:\iota_\jmath R_\jmath\Rightarrow \iota_\jmath'R_\jmath'\right\rbrace_{\jmath=1}^n$,  we have a corresponding natural transformation $\theta_{\mathbf A}\Rightarrow\theta_{\mathbf A'}$ which is given on $((a_1,\ldots,a_n), \subsetA)$ by the identity if $\jmath\in \subsetA$, and $\sigma_\jmath (a_\jmath): R_\jmath (a_\jmath )\rightarrow R_\jmath'(a_\jmath)$ if $\jmath\not\in \subsetA$.

Assume that $\catA$ admits an initial object $\initial{\catA}$, and further denote by $\initial{\catA}$ the subcategory of $\catA$ having this as its unique object. We note that the inclusion $\iota_0:\initial{\catA}\rightarrow\catA$ admits a right adjoint $R_0:\catA\rightarrow\initial{\catA}$. This pair $(\iota_0\dashv R_0:\initial{\catA}\rightarrow\catA)$ is the initial object of the category $\text{CoRefl}(\catA)$, and gives rise to the initial object of $\text{CoRefl}_n(\catA)$ as well

\[
\mathbf A_0:=\left\lbrace \iota_0 \dashv R_0:\initial{\catA}\to\catA\right\rbrace_{\jmath=1}^n.
\]
For any $\mathbf A$ in $\text{CoRefl}_n(\catA)$ we therefore have a unique morphism  $\mathbf A_0\to\mathbf A$, so by applying the functor $\theta_{(-)}$ we obtain a $\poset{n}$-module morphism $\theta_{\mathbf A_0}\rightarrow\theta_{\mathbf A}$. Observe that $\theta_{\mathbf A_0}$ is the $\poset{n}$-module structure described in Example \ref{ex:theta^n}.

\end{example}

\section{Systems of homotopy colimits}

Our method for constructing monads from $\poset{n}$-modules uses right Kan extensions and homotopy colimits. In fact, we need models for homotopy colimits that satisfy certain standard properties up to natural isomorphism rather than weak equivalence.  We call such models  \emph{systems of homotopy colimits}, and use this section to define them and provide examples.  We also  review right Kan extensions, and establish two important ways in which right Kan extensions interact with systems of homotopy colimits.  

Throughout this section,  $(\modcatA,\we)$ denotes a category $\modcatA$ equipped with a distinguished wide subcategory $\we$, the morphisms of which we refer to as \textit{weak equivalences}, and we assume that $\modcatA$ has a terminal object, which we denote by $\terminal{\modcatA}$. In the following we work in the category $\Cat$, which consists of small categories and functors between them. 

\begin{definition}\label{def:hocolim}
    A \emph{system of homotopy colimits} on $(\modcatA, \we)$ is a collection of functors
    \[
    \syshocolim:=\{\hocolim_{\indexI}:\fun{\indexI}{\modcatA}\rightarrow\modcatA\,|\,\indexI\in \ob{\Cat}\}
    \]
    such that the following properties hold.
    \begin{enumerate}
        \item (Fubini) For all $\indexI,\indexJ\in\ob{\Cat}$ and functors $F:\indexI\times\indexJ\rightarrow \modcatA$, there exist natural isomorphisms
        \[
        \hocolim_{\indexJ}\ \hocolim_{\indexI}\ F\cong\hocolim_{\indexI\times\indexJ}\ F\cong\hocolim_{\indexI}\ \hocolim_{\indexJ}\ F.
        \]
        \item\label{def:hocolim2} For any functor $\alpha:\indexI\to\indexJ$, there exists a natural transformation $\Tilde{\alpha}$

\begin{center}
\begin{tikzcd}[row sep=1em, column sep=1em]
\fun{\indexJ}{\modcatA} 
    \ar[ddr, swap, "\alpha^*", ""{name=alpha}]
    \ar[rr,  "\hocolim_{\indexJ}", ""{name=F}]
&&
\modcatA,
\\\\
&
\fun{\indexI}{\modcatA}
    \ar[ruu, "\hocolim_{\indexI}", swap, ""{name=G}]
    \ar[uu, Rightarrow, shorten=3mm, to=F,  "\Tilde{\alpha}\;"]
\end{tikzcd}

\end{center}
where $\alpha^*$ denotes the functor given by precomposition with $\alpha$.

        \item If $\mbox{Const}_{\terminal{\modcatA}}:\indexI\rightarrow\modcatA$ is the constant functor at $\terminal{\modcatA}$ for any $\indexI\in\ob{\Cat}$, then there exists a natural isomorphism
        \[
        \hocolimunder{\indexI}\ (\mbox{Const}_{\terminal{\modcatA}})\cong\terminal{\modcatA}.
        \]

        \item\label{def:hocolim4}If $\poset{0}$ denotes the category with one object, denoted $\emptyset$, and no non-identity morphisms, then there exists a natural isomorphism
        \[
        \hocolimunder{\poset{0}}\ F\cong F(\emptyset)
        \]
        for all functors $F:\poset{0}\ \rightarrow\modcatA$.

        \item Let $F,G\in \ob{\fun{\indexI}{\modcatA}}$. For any natural transformation $F\Rightarrow G$ that is an objectwise weak equivalence, 
       the induced map $\hocolim_{\indexI}F\rightarrow\hocolim_{\indexI}G$ is a weak equivalence as well. 
    \end{enumerate}
\end{definition}

\begin{notation}
    A  category $\modcatA$ that has a terminal object and is equipped with a distinguished wide subcategory $\we$ and a system of homotopy colimits $\syshocolim$ is denoted by a triple $(\modcatA,\we,\syshocolim)$.
\end{notation}

Any cocomplete category that has a terminal object also has an associated system of homotopy colimits.
\begin{example}
Suppose $\modcatA$ is  cocomplete and has a terminal object.  Then standard properties of colimits ensure that 
$$
\{{\operatorname{colim}}_\indexI:\fun{\indexI}{\modcatA}\rightarrow\modcatA\,|\,\indexI\in \ob{\Cat}\}
$$
is a system of homotopy colimits for $\modcatA$, when the weak equivalences are the isomorphisms in $\modcatA$.  
\end{example}

As the next two examples show, standard models for homotopy colimits can be used to form systems of homotopy colimits.

\begin{example} 
Suppose $\modcatA$ is a simplicial model category in which every object is cofibrant.  For a small category $\indexI$ and an $\indexI$-diagram $X:\indexI\to\modcatA$, one can use \cite[Definition 18.1.2]{H03}  to define $\hocolimunder{\indexI}X$. It follows by arguments dual to those used in Appendix A of \cite{BJM15}, that properties 1 through 4 are satisfied by this model for homotopy colimits.  Property 5 is \cite[Theorem 18.5.4(1)]{H03}. If $\modcatA$ is a simplicial model category with functorial cofibrant replacement, the condition that every object is cofibrant can be removed, provided that each diagram $X$ is postcomposed with the cofibrant replacement functor before applying  \cite[Definition 18.1.2]{H03}. 
\end{example} 

\begin{example} Another standard model for homotopy colimits of diagrams in simplicial model categories is the geometric realization of the simplicial bar construction of the diagram (see \cite[Chapter 5]{Riehl2014}).  Arakawa showed in \cite{arakawa} that this model can be adapted for use in $Ch(\catA)$, equipped with the injective model structure. Here $\catA$ is a Grothendieck abelian category, and $Ch(\catA)$ is the category of chain complexes in $\catA$. The injective model structure on $Ch(\catA)$ has as its weak equivalences the quasi-isomorphisms of chain complexes.   For an $\indexI$-diagram  in $Ch(\catA)$, its bar construction is a simplicial object in $Ch(\catA)$ and by normalizing this in the simplicial direction one obtains a bicomplex in $\catA$ whose total complex serves as a model for the homotopy colimit of the diagram \cite[Corollary 3.14]{arakawa}.  One can check that this model satisfies the properties of Definition \ref{def:hocolim} and provides $Ch(\catA)$ with a system of homotopy colimits. If one uses the Moore complex instead of the normalized chain complex construction, one also gets a model for homotopy colimits in $Ch(\catA)$, which however does not satisfy axioms 3 and 4 in Definition \ref{def:hocolim}.
\end{example}

In the next sections of the paper, systems of homotopy colimits will be used in conjunction with right Kan extensions.  The remainder of this section will be used to review right Kan extensions and prove Lemma \ref{lem:hocolim}, in which we establish useful properties of the relationship between  homotopy colimits and  right Kan extensions.

\begin{definition}\cite[Definition 6.1.1]{Riehl2017}\label{def:Kan}
 The \textit{right Kan extension} of $F:\indexI\to\catB$ along $\gamma:\indexI\to\catA$ is the initial pair $(\Ran{F}{\gamma},\epsilon)$ consisting of a functor $\Ran{F}{\gamma}:\catA \rightarrow \catB$ together with a natural transformation $\epsilon: (\Ran{F}{\gamma})  K \Rightarrow F$, in the sense that any natural transformation $\delta:G  \gamma\Rightarrow F$, where $G:\catA\to\catB$ is a functor, factors uniquely through $\epsilon$, as illustrated in the following diagrams:

\begin{center}
    \begin{tikzcd}
\indexI  
    \ar[ddr, "\gamma"', ""{name=K}]
    \ar[rr, "F", ""{name=F}]
&&
\catB
\\\\
&
\catA
    \ar[ruu, "G", swap, ""{name=G}]
    \ar[uu, Rightarrow, shorten=5mm, to=F,  "\delta \;"]
\end{tikzcd}
=
 \begin{tikzcd}[column sep=3em]
\indexI  
    \ar[ddr, "\gamma"', ""{name=K}]
    \ar[rr, "F", ""{name=F}]
&&
\catB.
\\\\
&
\catA
    \ar[ruu, bend left=15, "\Ran{F}{\gamma}" description, ""{name=Ran, below}]
    \ar[ruu, bend right=50, swap, "G", ""{name=G} ]
    \ar[uu, Rightarrow, shift left, shorten=5mm, to=F,  "\epsilon\;"]
    \arrow[Rightarrow, "\exists!", shorten=2mm, from=G, to=Ran]
\end{tikzcd}
\end{center}

\end{definition}

Right Kan extensions need not exist, in general.  The following theorem provides a sufficient condition for existence and an explicit formulation of the right Kan extension of $F$ along $\gamma$.  In the  theorem, $a\downarrow \gamma$  denotes the comma category whose objects are pairs consisting of an object $i\in \ob{\indexI}$ and a morphism $a\to \gamma(i)$ in $\catA$; see \cite{Riehl2017} for details. 
We denote by $\text{pr}_a: a\downarrow \gamma \to \indexI$  the evident projection functor.

\begin{theorem}{\cite[Theorem 6.2.1]{Riehl2017}}\label{theorem:Kanlimits}
Given functors $\gamma:\indexI\to\catA$ and $F:\indexI\to\catB$ such that for every $a\in\ob{\catA}$ the limit
\[
\Ran{F}{\gamma}(a)=\lim\left(a\downarrow \gamma\xrightarrow{\text{pr}_a}\indexI\xrightarrow{F}\catB\right)
\]
exists in $\catA$, these limits define the right Kan extension $\Ran{F}{\gamma}:\catA\to\catB$, and the natural transformation $\epsilon:\Ran{F}{\gamma}  \gamma\Rightarrow F$ can be extracted from the limit cone.
\end{theorem}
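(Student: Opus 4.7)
The plan is to construct the right Kan extension pointwise as described in the statement, and then verify that the assignment $a \mapsto \lim(F \circ \text{pr}_a)$ is functorial, that the limit cones assemble into a natural transformation $\epsilon$, and that the pair $(\Ran{F}{\gamma},\epsilon)$ satisfies the universal property of Definition \ref{def:Kan}. Because the existence of each pointwise limit is assumed, the argument is essentially a matter of threading the universal property of the limits through the relevant naturality squares.

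First I would turn $a\mapsto \Ran{F}{\gamma}(a)$ into a functor. A morphism $f: a\to a'$ in $\catA$ induces a functor $f_{!}: a'\!\downarrow\!\gamma \to a\!\downarrow\!\gamma$ by precomposition with $f$, and this functor satisfies $\text{pr}_{a}\circ f_{!}=\text{pr}_{a'}$. Consequently, the limit cone defining $\Ran{F}{\gamma}(a)$ restricts along $f_{!}$ to a cone on $F\circ\text{pr}_{a'}$ based at $\Ran{F}{\gamma}(a)$, and the universal property of $\Ran{F}{\gamma}(a')$ yields a unique induced morphism $\Ran{F}{\gamma}(f):\Ran{F}{\gamma}(a)\to\Ran{F}{\gamma}(a')$. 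Functoriality (preservation of identities and composition) follows from the uniqueness clause of the universal property applied to $\id{a}$ and to composites $g\circ f$.

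Next I would build $\epsilon:\Ran{F}{\gamma}\circ\gamma\Rightarrow F$. For each $i\in\ob{\indexI}$, the object $(i,\id{\gamma(i)})$ lives in $\gamma(i)\!\downarrow\!\gamma$, and projecting the limit cone at that object gives a morphism $\epsilon_i:\Ran{F}{\gamma}(\gamma(i))\to F(i)$. Naturality of $\epsilon$ in $i$ reduces to two diagram chases: the fact that the $\epsilon$ maps form a cone and the fact that $\Ran{F}{\gamma}(\gamma(h))$ is induced via $h_{!}$ for a morphism $h:i\to i'$ in $\indexI$.

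Finally I would verify the universal property. Given a functor $G:\catA\to\catB$ and a natural transformation $\delta:G\circ\gamma\Rightarrow F$, I need a unique natural transformation $\sigma:G\Rightarrow\Ran{F}{\gamma}$ with $\epsilon\whisk \gamma \circ (\sigma\whisk \gamma)=\delta$. For each $a\in\ob{\catA}$, the family
\[
\bigl\{\,\delta_i\circ G(f):G(a)\to F(i)\,\bigr\}_{(i,f)\in\ob{a\downarrow\gamma}}
\]
is a cone on $F\circ\text{pr}_a$ over $G(a)$: compatibility along a morphism $(i,f)\to(i',f')$ in $a\!\downarrow\!\gamma$ follows from naturality of $\delta$ and functoriality of $G$. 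The universal property of $\Ran{F}{\gamma}(a)$ then yields a unique $\sigma_a:G(a)\to\Ran{F}{\gamma}(a)$, and naturality of $\sigma$ in $a$ and the identity $\epsilon_i\circ \sigma_{\gamma(i)}=\delta_i$ both follow from the uniqueness part of the limit's universal property.

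The main obstacle is bookkeeping rather than conceptual: one must keep track of several simultaneous universal properties (of each pointwise limit, of the comma-category projections, and of $\delta$), and show that the morphisms extracted from them cohere. The risk of error lies in the uniqueness arguments, where the same morphism must be recognized as the unique mediating map for two different cones; in each case this reduces to checking that, after projection to $F(i)$, the two candidates agree, which is precisely the content of the naturality square for $\delta$ combined with functoriality of $G$.
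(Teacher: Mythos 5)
This theorem is quoted from \cite[Theorem 6.2.1]{Riehl2017} and the paper supplies no proof of its own, so there is nothing internal to compare against; your argument is the standard proof of the pointwise formula and it is correct. You construct functoriality via the precomposition functors $f_{!}:a'\!\downarrow\!\gamma\to a\!\downarrow\!\gamma$, extract $\epsilon_i$ as the limit projection at $(i,\id{\gamma(i)})$, and verify the universal property by recognizing $\{\delta_i\circ G(f)\}$ as a cone — exactly the route taken in the cited reference. The only step worth writing out fully in a final version is the uniqueness of $\sigma$, which needs the identity $\pi_{(i,f)}=\epsilon_i\circ\Ran{F}{\gamma}(f)$ together with naturality of the candidate $\sigma'$ to reduce both candidates to the same cone; you gesture at this correctly at the end.
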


  In fact, the right Kan extension is functorial in the sense that there exists a functor
\[ \Ran{}{\gamma}: \fun{\indexI}{\catB} \rightarrow \fun{\catA}{\catB}.\]
See \cite[Proposition 6.5]{Riehl2017} for further details. 

\begin{remark}\label{Remark:Kanexsistence}
    It follows directly by Theorem \ref{theorem:Kanlimits} that if $\catA$ is a small category, and $\catB$ is complete, then the right Kan extension of $F:\indexI\to\catB$ along $\gamma:\indexI\to\catA$ always exists. To ease the assumptions we will therefore throughout usually assume $\catA$ is small and $\catB$ is complete to ensure existence of right Kan extension, but these assumptions can often be relaxed to existence of the limits in Theorem \ref{theorem:Kanlimits}.
\end{remark}

\begin{lemma}\label{lem:hocolim} Let $(\modcatA,\we,\syshocolim)$ be a complete category with terminal object $\terminal{M}$ and a system of homotopy colimits, and let $\gamma:\indexI\rightarrow\catA$ and $\beta:\indexJ\rightarrow\catB$ be two functors satisfying the hypotheses of Theorem \ref{theorem:Kanlimits}.
        \begin{enumerate}
            \item\label{lem:hocolimprop1}  If $\gamma$ is injective on objects and fully faithful, and $\catA(a,\gamma(i))=\emptyset$ for all $a\not\in \im(\gamma)$, $i\in\ob{\indexI}$, then for any functor $F:\indexI\times\indexJ\rightarrow\modcatA$, there exists a natural isomorphism
            \[
            \hocolimunder{\catA\times\catB}\ \Ran{}{\gamma\times\beta}F\cong\hocolimunder{\catA}\ \Ran{}{\gamma}\left(\hocolimunder{\catB}\ \Ran{}{Id_\indexI\times\beta}F\right).
            \]

            \item \label{lem:hocolimprop2} Given $\pi:\indexI\rightarrow\indexJ$ and $\tau:\catA\rightarrow\catB$ such that the following diagram commutes 
            \begin{center}
                \begin{tikzcd}
                    \indexI \arrow[r, "\pi"] \arrow[d, swap, "\gamma"] & \indexJ \arrow[d, "\beta"]\\
                    \catA \arrow[r,swap,"\tau"]& \catB, \\
                \end{tikzcd}
            \end{center}
            together with a natural transformation $\sigma:\Ran{}{\gamma} \pi^\ast\Rightarrow\tau^\ast \Ran{}{\beta}$, then for any functor $F:\indexJ\to\modcatA$, there exists an induced morphism
            \[
            \hocolimunder\catA\ \Ran{}{\gamma}\pi^\ast F\rightarrow\hocolimunder{\catB}\ \Ran{}{\beta}F.
            \]
        \end{enumerate}

\end{lemma}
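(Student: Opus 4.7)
The plan is to prove both parts by computing the relevant right Kan extensions pointwise, exploiting the special form forced by the hypotheses on $\gamma$ in part (\ref{lem:hocolimprop1}), and then assembling the required morphisms from the Fubini axiom and axiom 3 of Definition \ref{def:hocolim}.

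For part (\ref{lem:hocolimprop1}), the key observation is that the hypotheses on $\gamma$ completely determine the comma categories $a\downarrow\gamma$: when $a=\gamma(i)$ the object $(i,\id{\gamma(i)})$ is initial, by injectivity on objects and fully faithfulness, and when $a\notin \im(\gamma)$ the comma category is empty, since $\catA(a,\gamma(i))=\emptyset$ for every $i$. Theorem \ref{theorem:Kanlimits} then yields, for any functor $G:\indexI\to\modcatA$, a natural isomorphism
\[
\Ran{G}{\gamma}(a)\cong \begin{cases} G(i) & \text{if } a=\gamma(i),\\ \terminal{\modcatA} & \text{if } a\notin\im(\gamma). \end{cases}
\]
The same analysis applied to $\gamma\times\beta$ shows that $\Ran{}{\gamma\times\beta}F(a,b)\cong \terminal{\modcatA}$ whenever $a\notin\im(\gamma)$, and that $\Ran{}{\gamma\times\beta}F(\gamma(i),b)\cong \Ran{}{Id_\indexI\times\beta}F(i,b)$ for $a=\gamma(i)$, using fully faithfulness to identify the two comma categories. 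Taking $\hocolim_\catB$ and invoking axiom 3 of Definition \ref{def:hocolim} to evaluate the constant-terminal diagrams then gives a natural isomorphism of functors $\catA\to\modcatA$,
\[
\hocolimunder{\catB}\ \Ran{}{\gamma\times\beta}F \cong \Ran{}{\gamma}\Bigl(\hocolimunder{\catB}\ \Ran{}{Id_\indexI\times\beta}F\Bigr),
\]
because the right-hand side also evaluates, via the formula above for $\Ran{}{\gamma}$, to $\hocolim_\catB\Ran{}{Id_\indexI\times\beta}F(i,-)$ at $\gamma(i)$ and to $\terminal{\modcatA}$ on the complement of $\im(\gamma)$. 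Applying $\hocolim_\catA$ to both sides and rewriting $\hocolim_{\catA\times\catB}$ via the Fubini axiom then delivers the claim.

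For part (\ref{lem:hocolimprop2}), the desired morphism is the composite
\[
\hocolimunder{\catA}\ \Ran{}{\gamma}\pi^*F \xrightarrow{\hocolimunder{\catA}(\sigma_F)} \hocolimunder{\catA}\ \tau^*\Ran{}{\beta}F \xrightarrow{\tilde{\tau}_{\Ran{}{\beta}F}} \hocolimunder{\catB}\ \Ran{}{\beta}F,
\]
where $\sigma_F:\Ran{}{\gamma}\pi^*F\Rightarrow\tau^*\Ran{}{\beta}F$ denotes the component of $\sigma$ at $F$, the first arrow is obtained by applying the functor $\hocolim_\catA:\fun{\catA}{\modcatA}\to\modcatA$ to $\sigma_F$, and the second is the component at $\Ran{}{\beta}F$ of the natural transformation $\tilde{\tau}$ furnished by axiom 2 of Definition \ref{def:hocolim}.

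The main technical obstacle lies in part (\ref{lem:hocolimprop1}): the pointwise identifications above must assemble into an isomorphism of functors, natural in $F$. Tracking the bijections between comma categories carefully reduces the naturality check on $\im(\gamma)$ to the known naturality of the Kan extension along $\beta$ together with the Fubini isomorphism, while on the complement of $\im(\gamma)$ every morphism produced on either side is a morphism into $\terminal{\modcatA}$, hence uniquely determined, so compatibility with the functorial structure in $\catA$ is automatic.
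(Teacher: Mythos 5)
Your proposal is correct and follows essentially the same route as the paper's proof: part (1) via the pointwise case analysis of $\Ran{}{\gamma}$ forced by the hypotheses on $\gamma$, the identification $\Ran{}{\gamma\times\beta}F(\gamma(i),b)\cong\Ran{}{Id_\indexI\times\beta}F(i,b)$, axiom 3 for the terminal piece, and Fubini; part (2) as the composite of $\hocolim_{\catA}$ applied to $\sigma_F$ with the comparison map $\widetilde{\tau}$ from axiom 2. The only difference is cosmetic (you invoke Fubini at the end rather than in the middle), so no further comment is needed.
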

\begin{proof}
\textit{1.} Since $\gamma$ is injective on objects and fully faithful, and $\catA(a,\gamma(i))=\emptyset$ for all $a\not\in \text{Im}(\gamma)$, the formula for right Kan extension in Theorem \ref{theorem:Kanlimits} implies that for any $G:\indexI\rightarrow\catC$, with $\catC$ a complete category, and any $a\in\ob{\catA}$,  
\[
\Ran{G}{\gamma}(a)=\begin{cases}
    G(i) & \text{if } a=\gamma(i)\\
    \terminal{\modcatA} & \text{if }a\not\in \im(\gamma).
\end{cases}
\]

In particular, 
\[
    \Ran{}{\gamma}\hocolimunder{\catB}\ \Ran{}{Id_\indexI\times\beta}F(a)=\begin{cases}
    \hocolimunder{\catB}\ \Ran{}{Id_\indexI\times\beta}F(i) & \text{if }a=\gamma(i)\\
    \terminal{\modcatA} & \text{if }a\not\in \im(\gamma).
\end{cases}
\]

By property 1 of Definition \ref{def:hocolim}, 
\[
\hocolimunder{\catA\times\catB}\ \Ran{}{\gamma\times\beta}F\cong\hocolimunder{\catA}\ \hocolimunder{\catB}\ \Ran{}{\gamma\times\beta}F.
\]

Again by the formula for right Kan extensions, we can for any $a\in\ob{\catA}$, $b\in\ob{\catB}$, write
\begin{align*}
    \Ran{}{\gamma\times\beta}F(a,b)&\cong\operatorname{lim}_{(a,b)\downarrow \gamma\times\beta}\left(F \text{pr}_{(a,b)}\right)\\
&\cong\operatorname{lim}_{a\downarrow\gamma}\left(\operatorname{lim}_{b\downarrow\beta}(F \text{pr}_b)\right) \text{pr}_a\\
    &\cong\begin{cases}
        \Ran{}{Id_\indexI\times\beta}F(i,b) &\text{if } a=\gamma(i)\\
        \terminal{\modcatA} &\text{if } a\not\in \im(\gamma),\\
    \end{cases}
\end{align*}

so by property 3 of Definition \ref{def:hocolim}, 
\begin{align*}
    \hocolimunder{\catB}\ \Ran{}{\gamma\times\beta}F(a)&\cong\begin{cases}
    \hocolimunder{\catB}\ \Ran{}{Id_\indexI\times\beta}F(i) & \text{if }a=\gamma(i)\\
    \terminal{\modcatA} & \text{if }a\not\in \im(\gamma)
\end{cases}\\
&\cong\Ran{}{\gamma}\hocolimunder{\catB}\ \Ran{}{Id_\indexI\times\beta}F(a)\\
\end{align*}
and we can conclude the proof of part 1 of the lemma.

\textit{2.} The component of the natural transformation $\sigma$ at $F\in\ob{\fun{\indexJ}{\modcatA}}$ is a natural transformation $\sigma_F:\Ran{(F\pi)}{\gamma}\Rightarrow (\Ran{F}{\beta})  \tau$. By the functoriality of $\hocolimunder{\catA}$, there is an induced morphism
\[\hocolimunder{\catA}\ \Ran{(F  \pi)}{\gamma} \rightarrow \hocolimunder{\catA}\ (\Ran{F}{\beta})  \tau. \]
By property 2 of Definition \ref{def:hocolim}, there exists a morphism
\[ \widetilde{\tau}:\hocolimunder{\catA}\ (\Ran{F}{\beta})  \tau \rightarrow \hocolimunder{\catB}\ \Ran{F}{\beta}.\] 
The desired morphism is the composite of these two.
\end{proof}

\section{Main theorem}
In this section, we complete the process of deriving monads from cubical diagrams.  Starting with a $\poset{n}$-module on a category $\catA$, we use the iterated homotopy cofibers of cubical diagrams arising from this $\poset{n}$-module  to construct a monad on $\fun{\catA}{\modcatA}$ for any complete category  with a system of homotopy colimits $(\modcatA,\we,\syshocolim)$.  We achieve this goal by using a model for the iterated homotopy cofiber  that allows us to realize it as a single homotopy colimit in a functorial fashion. We develop  this model in Section \ref{section:hocofibermodel}. In Section \ref{section:mainthm}, we construct the monads and prove our main result.  Throughout this section, we assume that that $(\modcatA,\we,\syshocolim)$ is a complete category with a system of homotopy colimits.

\subsection{A model for iterated homotopy cofibers}\label{section:hocofibermodel}

In this section, we describe our model for the iterated homotopy cofiber of a cubical diagram.  To construct this model, we  generate an associated expanded cubical diagram, and then take the homotopy colimit of this expanded diagram. We begin by providing an alternate description of $n$-cubical diagrams, which we use  to obtain  the desired expanded diagram for an $n$-cubical diagram $\chi:\poset{n}\to\modcatA$.

Let $\I$ be the category with two objects, denoted by $0$ and $1$, and a single non-identity morphism $t:0\to1$. If we equip this with the symmetric monoidal product $\dicei$, for which  $1$ is the unit, and $0\dicei0=0$, then the morphism $\omega_1:(\poset{1},\cap)\to (\I,\dicei)$, which sends $\emptyset$ to $0$ and $\{1\}$ to 1 is a monoidal isomorphism. By equipping $\I^{\times n}$ with the component-wise symmetric monoidal product extending $\dicei$, this isomorphism can be extended to a monoidal isomorphism $\omega_n:(\poset{n},\cap)\to (\I^{\times n},\dicei)$ as follows.

For any $\subsetA\in\ob{\poset{n}}$, let $\textrm{char}_\subsetA$ be the characteristic function of $\subsetA$, i.e., for $1\leq i\leq n$, $\textrm{char}_\subsetA:\n{n}\to\{0,1\}$ is given by
\[
\textrm{char}_\subsetA(i)=\begin{cases}
    0 & \textrm{ if } i\notin \subsetA\\
    1 & \textrm{ if } i\in \subsetA.
\end{cases}
\]
We define $\omega_n:\poset{n}\to \I^{\times n}$ on objects $\subsetA\in\ob{\poset{n}}$ by
\[
\omega_n(\subsetA)=(\textrm{char}_\subsetA(1),\dots,\textrm{char}_\subsetA(n)),
\]
and extend to morphisms in the obvious way.
It is clear that $\omega_n$ is an isomorphism. To see that $\omega_n$ is monoidal, consider two subsets $\subsetA,\subsetB\in\ob{\poset{n}}$. Using our description of $\dicei$, we see that

\[
\begin{split}
    \omega_n(\subsetA\cap \subsetB)&=(\textrm{char}_{\subsetA\cap \subsetB}(1),\dots,\textrm{char}_{\subsetA\cap \subsetB}(n))\\
    &=(\textrm{char}_{\subsetA}(1)\dicei\textrm{char}_{\subsetB}(1),\dots,\textrm{char}_{\subsetA}(n)\dicei\textrm{char}_{\subsetB}(n)))\\
    &=\omega_n(\subsetA)\dicei\omega_n(\subsetB).
\end{split}
\]

The category $\I$ embeds into another symmetric monoidal category, denoted by $(\Lambda,\dicei,1)$, which is the category with objects $\{0,1,\mathscr{l}\}$, and non-identity morphisms given by
\[\mathscr{l}\xleftarrow{s}0\xrightarrow{t}1.\]

The symmetric monoidal product on $\I$ extends to $\Lambda$ by setting $\mathscr{l}\dicei d=\mathscr{l}$ for all $d\in\ob{\Lambda}$, and we further extend this to $\Lambda^{\times n}$ component-wise. 
For example, the category $\Lambda\times\Lambda$ is given by the diagram
\[
\begin{tikzcd}
    (1,1)&(0,1)\arrow{l}\arrow{r}&(\mathscr{l}, 1)\\
    (1,0)\arrow{d}\arrow{u}&(0,0)\arrow{l}\arrow{r}\arrow{d}\arrow{u}&(\mathscr{l},0)\arrow{d}\arrow{u}\\
    (1,\mathscr{l})&(0,\mathscr{l})\arrow{l}\arrow{r}&(\mathscr{l},\mathscr{l}).\\
\end{tikzcd}
\]

Let $\iota:\I\to\Lambda$ denote the inclusion functor, and set 
\[
\phi_n=\iota^{\times n} \omega_n:\poset{n}\to\Lambda^{\times n}.
\]
It is along this composite functor that we will extend $n$-cubical diagrams.

\begin{remark}

    Let $\chi:\poset{n}\to\modcatA$ be an $n$-cubical diagram, with $\modcatA$ a complete category.  The extended $\Lambda^{\times n}$-diagram of $\chi$ is then the right Kan extension of $\chi$ along $\phi_n:\poset{n}\to \Lambda^{\times n}$:

\[\begin{tikzcd}
    \poset{n}\arrow{rr}{\chi}\arrow{dr}[swap]{\phi_n}&& \modcatA.\\
    &\Lambda^{\times n}\arrow{ur}[swap]{\Ran{\chi}{\phi_n}}&\\
\end{tikzcd}\]

Since $\modcatA$ is complete by hypothesis, and $\Lambda^{\times n}$ is small, it follows by Remark \ref{Remark:Kanexsistence} that this right Kan extension does indeed exists.

We can apply Theorem \ref{theorem:Kanlimits}, which lets us derive, for any object $(l_1,\dots,l_n)\in\ob{\Lambda^{\times n}}$, a formula for $\Ran{\chi}{\phi_n}(l_1,\dots,l_n)$. If $\underline{l}=(l_1,\dots,l_n)\in \ob{\I^{\times n}}$, then $\underline{l}\downarrow\phi_n = \subsetA$, where $\subsetA\in\ob{\poset{n}}$ contains $i$ if $l_i=1$, and does not contain $i$ if $l_i=0$.  So,

\[
\begin{split}
   \Ran{\chi}{\phi_n}(l_1,\dots,l_n)&=\lim\left(\subsetA\xrightarrow{\text{pr}_{\underline{l}}}\poset{n}\xrightarrow{\chi}\modcatA\right) \\
   &= \chi(\subsetA).
\end{split}
\]

If $l_i=\mathscr{l}$ for some $i$, then $\underline{l}\downarrow\phi_n$ is the empty category, and 
\[
\begin{split}
   \Ran{\chi}{\phi_n}(l_1,\dots,l_n)&=\lim\left(\emptyset\xrightarrow{\text{pr}_{\underline{l}}}\poset{n}\xrightarrow{\chi}\modcatA\right) \\
   &=\operatorname{lim}_\emptyset\left(\poset{n}\xrightarrow{\chi}\modcatA\right) \\
   &=\terminal{\modcatA}.
\end{split}
\]
By noting that there exists a unique $\subsetA\in\ob{\poset{n}}$ satisfying $\phi_n(\subsetA)=(l_1,\dots,l_n)$ exactly when $(l_1,\dots, l_n)\in\ob{\I^{\times n}}$, we obtain the following formula.
\begin{equation}\label{eq:ranformula}
\Ran{\chi}{\phi_n}(l_1,\dots,l_n)=\begin{cases}
    \chi(\subsetA)&\textrm{ if } (l_1,\dots,l_n)=\phi_n(\subsetA)\\
    \terminal{\modcatA}& \textrm{ if } (l_1,\dots,l_n)\notin\text{Im}(\phi_n).
\end{cases}
\end{equation}
In other words, $\Ran{\chi}{\phi_n}(l_1,\dots,l_n)$ is the terminal object unless there exists an $\subsetA\in\ob{\poset{n}}$ such that $\phi_n(\subsetA)=(l_1,\dots,l_n)$, which justifies considering this right Kan extension as an extended $n$-cubical diagram of $\chi$.

\end{remark}

\begin{definition}
    Let $(\modcatA,\we,\syshocolim)$ be a complete category equipped with a system of homotopy colimits.  The \emph{iterated homotopy cofiber} of an $n$-cubical diagram $\chi:\poset{n}\to\modcatA$, denoted by $\icofib \chi$, is given by
    \[
    \icofib \chi=\hocolim_{\Lambda^{\times n}}\Ran{\chi}{\phi_n}.
    \]
\end{definition}

\subsection{Obtaining monads from modules}\label{section:mainthm}

In this section, we state and prove Theorem \Cref{theorem:pnmodstomonads}, which gives a functor between the categories of $\poset{n}$-modules and monads.  We begin by defining the endofunctor underlying the monad associated to a $\poset{n}$-module. 

Let $(\modcatA,\we,\syshocolim)$ be a complete category equipped with a system of homotopy colimits as in Definition \ref{def:hocolim} and let $(\catA,\theta)$ be a $\poset{n}$-module.  The functor $\theta$ is equivalently given by an $n$-cube of endofunctors 
\[ \tr{\theta}: \poset{n}\to \End{\catA}\] as in the proof of Proposition \ref{prop:equivmodulestructure}.  Given a functor $F:\catA\to \modcatA$, we obtain an $n$-cube of functors
\begin{equation}\label{e:ncubefrommodule}
\begin{split}
    \theintcube{(-)}{\theta}{F}:\poset{n}&\to\fun{\catA}{\modcatA}\\
    \subsetA&\mapsto F \tr{\theta}(\subsetA)
\end{split}
\end{equation}
by post-composition with $F$. Noting that $\fun{\catA}{\modcatA}$ is again complete, we may apply the iterated homotopy cofiber to obtain a functor $\icofib \theintcube{(-)}{\theta}{F}:\catA \to \modcatA$, which further extends to an endofunctor on the category $\fun{\catA}{\modcatA}$:
\[
\begin{split}
    \ifibcube{\theta}{\modcatA}:\fun{\catA}{\modcatA}&\to\fun{\catA}{\modcatA}\\
    F&\mapsto\icofib \theintcube{(-)}{\theta}{F}.
\end{split}
\]

It is this endofunctor obtained from a $\poset{n}$-module that we wish to show underlies a monad $\left(  \ifibcube{\theta}{\modcatA},\mult,\eta\right)$ on $\fun{\catA}{\modcatA}$.  We show, in the proof of the next lemma, that the unit and multiplication for the monad essentially arise from the symmetric monoidal structures of $\poset{n}$ and $\Lambda^{\times n}.$

\begin{proposition}\label{lem:Tmonad}
  Let $(\modcatA,\we,\syshocolim)$ be a complete category equipped with a system of homotopy colimits, and let $(\catA,\theta)$ be a $\poset{n}$-module. Then $\ifibcube{\theta}{\modcatA}$ underlies a monad $\left(\ifibcube{\theta}{\modcatA},\mult,\eta\right)$ on $\fun{\catA}{\modcatA}$.
\end{proposition}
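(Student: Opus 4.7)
The plan is to construct the unit $\eta$ and the multiplication $\mult$ by transporting the strict monoidal structure on $\phi_n \colon (\poset{n}, \cap, \n{n}) \to (\Lambda^{\times n}, \dicei, (1,\dots,1))$ through the iterated homotopy cofiber construction. Recall that $\phi_n$ is strict monoidal: $\phi_n(\n{n}) = (1,\dots,1)$ is the unit for $\dicei$, and $\phi_n(\subsetA \cap \subsetB) = \phi_n(\subsetA) \dicei \phi_n(\subsetB)$. Combined with the unital axiom $\tr{\theta}(\n{n}) = \id{\catA}$ of the $\poset{n}$-module, this will produce $\eta$; combined with the associativity $\tr{\theta}(\subsetA)\tr{\theta}(\subsetB) = \tr{\theta}(\subsetA \cap \subsetB)$, it will produce $\mult$.

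For $\eta$, I will use the functor $i \colon \poset{0} \to \Lambda^{\times n}$ picking out $(1,\dots,1)$. Property (2) of Definition~\ref{def:hocolim} supplies a natural transformation $\widetilde{i} \colon \hocolim_{\poset{0}} i^*(-) \Rightarrow \hocolim_{\Lambda^{\times n}}(-)$. Applied to $\Ran{\theintcube{(-)}{\theta}{F}}{\phi_n}$ and using Property (4) together with the pointwise formula~\eqref{eq:ranformula} to identify $\hocolim_{\poset{0}} i^*\Ran{\theintcube{(-)}{\theta}{F}}{\phi_n} \cong F \tr{\theta}(\n{n}) = F$, this produces a natural transformation $\eta_F \colon F \to \ifibcube{\theta}{\modcatA}(F)$.

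For $\mult$, I will first rewrite $(\ifibcube{\theta}{\modcatA})^2(F)$ as a single homotopy colimit over $\Lambda^{\times n} \times \Lambda^{\times n}$. Since right Kan extensions and homotopy colimits in $\fun{\catA}{\modcatA}$ are computed pointwise, they commute with right whiskering by endofunctors of $\catA$; combining this with $\tr{\theta}(\subsetA)\tr{\theta}(\subsetB) = \tr{\theta}(\subsetA \cap \subsetB)$ one sees that $\ifibcube{\theta}{\modcatA}(F)\,\tr{\theta}(\subsetB) \cong \hocolim_{\Lambda^{\times n}} \Ran{}{\phi_n}\bigl(F \tr{\theta}(- \cap \subsetB)\bigr)$. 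Fubini (Property (1)) together with Lemma~\ref{lem:hocolim}(1)---whose hypotheses on $\gamma = \phi_n$ are satisfied because $\phi_n$ is injective on objects, fully faithful, and $\Lambda^{\times n}(\vec l, \phi_n(\subsetA))$ is empty whenever some coordinate of $\vec l$ equals $\mathscr l$---then identifies $(\ifibcube{\theta}{\modcatA})^2(F)$ with $\hocolim_{\Lambda^{\times n} \times \Lambda^{\times n}} \Ran{\cap^* \theintcube{(-)}{\theta}{F}}{\phi_n \times \phi_n}$. Next, the monoidal relation $\phi_n \circ \cap = \dicei \circ (\phi_n \times \phi_n)$ combined with a pointwise case analysis of comma categories gives a natural isomorphism $\Ran{\cap^* \theintcube{(-)}{\theta}{F}}{\phi_n \times \phi_n} \cong \dicei^* \Ran{\theintcube{(-)}{\theta}{F}}{\phi_n}$: when $\vec{l_1} \dicei \vec{l_2} \in \I^{\times n}$, both comma categories admit a common initial object and the limits reduce to a single value $F \tr{\theta}(\subsetC)$; otherwise both comma categories are empty and both sides are the terminal object. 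Finally, Property (2) of Definition~\ref{def:hocolim} applied to $\dicei \colon \Lambda^{\times n} \times \Lambda^{\times n} \to \Lambda^{\times n}$ yields $\widetilde{\dicei} \colon \hocolim_{\Lambda^{\times n} \times \Lambda^{\times n}} \dicei^*(-) \Rightarrow \hocolim_{\Lambda^{\times n}}(-)$, and the composite of the three maps above defines $\mult_F$.

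The monad axioms will then follow by analogous unfoldings. The unit axioms reduce to $\vec l \dicei (1,\dots,1) = \vec l = (1,\dots,1) \dicei \vec l$, and associativity reduces, via a three-fold Fubini and the triple pointwise comparison, to the associativity of $\dicei$. The hard part will be the rigorous verification of the natural isomorphism $\Ran{\cap^* \theintcube{(-)}{\theta}{F}}{\phi_n \times \phi_n} \cong \dicei^* \Ran{\theintcube{(-)}{\theta}{F}}{\phi_n}$ via the comma-category analysis---this is where the monoidality of $\phi_n$ and the extension from $\I^{\times n}$ to $\Lambda^{\times n}$ really get used---together with confirming that $\alpha \mapsto \widetilde{\alpha}$ from Property~(2) of Definition~\ref{def:hocolim} is compatible with composition of functors, since that is what transports the monoidal coherences of $(\Lambda^{\times n}, \dicei, (1,\dots,1))$ to the monad coherences for $\ifibcube{\theta}{\modcatA}$.
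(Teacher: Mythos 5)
Your proposal is correct and follows essentially the same route as the paper: the unit comes from the functor $\poset{0}\to\Lambda^{\times n}$ picking out $(1,\dots,1)$ together with properties (2) and (4) of Definition \ref{def:hocolim}, and the multiplication comes from rewriting $T^2F$ via module associativity, Fubini, and Lemma \ref{lem:hocolim}(1), then comparing $\Ran{\cap^*(-)}{\phi_n\times\phi_n}$ with $\dicei^*\Ran{(-)}{\phi_n}$ and applying $\widetilde{\dicei}$, with the monad axioms inherited from the monoidal structure of $(\Lambda^{\times n},\dicei)$. The only cosmetic difference is that you propose to establish the comparison as an objectwise isomorphism by comma-category analysis, whereas the paper obtains the (sufficient) one-directional natural transformation directly from the universal property of the right Kan extension applied to the square $\phi_n\circ\cap=\dicei\circ(\phi_n\times\phi_n)$.
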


\begin{proof}
    
For ease of notation, let $T:=\ifibcube{\theta}{\modcatA}$. 

We first wish to define the unit $\eta:\id{\fun{\catA}{\modcatA}}\Rightarrow T$. To do so, we define the functor
\[
\begin{split}
  \eta^n: \poset{0}& \to \Lambda^{\times n}   \\
  \emptyset&\mapsto (1,\dots,1).
\end{split}
\]
  Using property \ref{def:hocolim2} of Definition \ref{def:hocolim}, there exists a natural transformation $\widetilde{\eta^n}$ 

  \[\begin{tikzcd}[row sep=large, column sep=1em] 
\fun{\Lambda^{\times n}}{\modcatA} \arrow[dr, "(\eta^n)^*"'{name=K}] 
 \arrow[rr, "\hocolim_{\Lambda^{\times n}}", ""{name=F, below}] && \modcatA. \\ 
 & |[alias=C]| \fun{\poset{0}}{\modcatA} \arrow[ur, swap,
 "\hocolim_{\poset{0}}"] 
 \arrow[Rightarrow, swap, from=C, to=F, "\;\widetilde{\eta^n}",shorten >=1em,shorten <=1em] 
\end{tikzcd}\]

For $F\in\ob{\fun{\catA}{\modcatA}}$ and $a\in\ob{\catA}$, we define $\eta_F(a)$ as the evaluation of this natural transformation $\widetilde{\eta^n}$ at $\Ran{F(\theta(a,-))}{\phi_n}$:
\[
\eta_F(a):\hocolim_{\poset{0}}\Ran{F(\theta(a,-))}{\phi_n} \eta^n\Rightarrow\hocolim_{\Lambda^{\times n}}\Ran{F(\theta(a,-))}{\phi_n}=TF(a),
\]
noting that this is natural in both $F$ and $a$. The description of the right Kan extension expressed in Equation (\ref{eq:ranformula}) implies that
\[\Ran{F(\theta(a,-))}{\varphi_n}  \eta^n = \Ran{F(\theta(a,\n{n}))}{\varphi_n}=F(a),\]
and by property 4 of Definition \ref{def:hocolim} it further follows that $\hocolimunder{\poset{0}}\ \Ran{F(a)}{\varphi_n}\cong F(a)$, hence this indeed defines a natural transformation $\eta:\id{\fun{\catA}{\modcatA}}\Rightarrow T$. 

To define the multiplication $\mu:T  T\Rightarrow T$, we first wish to show that for any $F\in\ob{\fun{\catA}{\modcatA}}$, $a\in\ob{\catA}$,  
$$T(TF)(a)\cong\hocolimunder{\Lambda^{\times n}\times \Lambda^{\times n}}\Ran{\cap^\ast F(\theta(a,-))}{\varphi_n\times\varphi_n}.$$ To do so, we note that the definition of $T$ and the assumption that $\theta$ is a $\poset{n}$-module structure on $\catA$ together imply that
\[
\begin{split}
    T(TF)(a)&=\hocolim_{\Lambda^{\times n}}\Ran{TF(\theta(a,-))}{\phi_n}\\
    &=\hocolim_{\Lambda^{\times n}}\Ran{\hocolim_{\Lambda^{\times n}}\Ran{F(\theta(\theta(a,-),-))}{\id{\mathcal{P}(\underline{n})}\times\phi_n}}{\phi_n}\\
    &=\hocolim_{\Lambda^{\times n}}\Ran{\hocolim_{\Lambda^{\times n}}\Ran{F(\theta(a,-\cap -))}{\id{\mathcal{P}(\underline{n})}\times\phi_n}}{\phi_n}.
    \end{split}
\]
Next we wish to apply property 1 of Lemma \ref{lem:hocolim}. It is clear that $\varphi_n$ is injective on objects and fully faithful, since per definition $\varphi_n=\iota^{\times n}  \omega_n$ with $\iota$ the inclusion and $\omega_n$ a monoidal isomorphism.
To see that $\Lambda^{\times n}((l_1,...,l_n),\varphi_n(\subsetA))=\emptyset$ for all $(l_1,...,l_n)\notin \text{Im}(\varphi_n)$ and $\subsetA\in\ob{\poset{n}}$, we note that $\text{Im}(\varphi_n)=\{(t_1,...,t_n)|t_i=0\text{ or } 1\}$, so $l_i=\mathscr{l}$ for at least one $i$. Since $\Lambda(\mathscr{l},0)=\Lambda(\mathscr{l},1)=\emptyset$, the desired equality follows. We can therefore apply part 1 of Lemma \cref{lem:hocolim} from which it follows that     
\[ 
\begin{split}
\hocolim_{\Lambda^{\times n}}&\Ran{}{\phi_n}{\hocolim_{\Lambda^{\times n}}\Ran{F(\theta(a,-\cap-))}{\id{\mathcal{P}(\underline{n})}\times\phi_n}}
    \\
    \cong&\hocolim_{\Lambda^{\times n}\times\Lambda^{\times n}}\Ran{F(\theta(a,-)) \cap}{\phi_n\times \phi_n}\\
     =&\hocolim_{\Lambda^{\times n}\times\Lambda^{\times n}}\Ran{\cap^\ast F(\theta(a,-))}{\phi_n\times \phi_n}.
    \end{split}
\]
It follows that $T(TF)(a)\cong\hocolimunder{\Lambda^{\times n}\times \Lambda^{\times n}}\Ran{\cap^\ast F(\theta(a,-))}{\varphi_n\times\varphi_n}$. 

Next we wish to apply part 2 of Lemma \ref{lem:hocolim}, for which we need to prove the existence of a natural transformation
\begin{equation}\label{eq:alpha}
    \Ran{\cap^\ast(-)}{\varphi_n\times\varphi_n}\Rightarrow \dicei^\ast\Ran{(-)}{\varphi_n},
\end{equation}
between functors $\fun{\poset{n}}{\modcatA}\to\fun{\Lambda^{\times n}\times \Lambda^{\times n}}{\modcatA}$. Let $G\in\ob{\fun{\poset{n}}{\modcatA}}$, and note that the following diagram commutes:
\begin{center}
    \begin{tikzcd}
        \poset{n}\times \poset{n}
            \arrow[r, "\cap"]
            \arrow[d, swap, "\varphi_n\times\varphi_n"]
        &
        \poset{n}
            \arrow[d, "\varphi_n"]
        \\
        \Lambda^{\times n}\times \Lambda^{\times n}
            \arrow[r, swap, "\dicei"]
        &
        \Lambda^{\times n}.
    \end{tikzcd}
\end{center}
The existence of the desired natural transformation follows by applying the universal property of right Kan extensions to the following diagram
\begin{center}
    \begin{tikzcd}
        \poset{n}\times \poset{n}
            \arrow[r, "\cap"]
            \arrow[rdd, swap, "\varphi_n\times\varphi_n"]
        &
        \poset{n}
            \arrow[r, "G"]
            \arrow[d, "\varphi_n"]
        &
        \modcatA.\\
        &
        \Lambda^{\times n}
            \arrow[ur, "\Ran{G}{\varphi_n}" description]
        &
        \\
        &
        \Lambda^{\times n}\times \Lambda^{\times n}
            \arrow[u, swap, "\dicei"]
            \arrow[uur,  swap, bend right, "\Ran{\cap^\ast G}{\varphi_n\times \varphi_n}"]
        &        
    \end{tikzcd}
\end{center}
Part \ref{lem:hocolimprop2} of Lemma \ref{lem:hocolim} then implies that we have a morphism 
\[\mu_F(a): T(TF)(a)\to TF(a),\]
which is natural in $F$ and given by
\[ \hocolim_{\Lambda^{\times n}\times\Lambda^{\times n}}\Ran{\cap^\ast F(\theta(a,-))}{\phi_n\times \phi_n} \xrightarrow{\widetilde{\dicei}}\hocolim_{\Lambda^{\times n}}\Ran{F(\theta(a,-))}{\phi_n}.\]
It follows from the associativity and unitality of the monoidal structure $\dicei$ on $\Lambda^{\times n}$ that $\mu_F$ is both unital with respect to $\eta_F$ and associative.

\end{proof}

The association of a $\poset{n}$-module $(\catA, \theta)$ to the monad $(\ifibcube{\theta}{\modcatA},\mult,\eta)$ is functorial in $n$ in the following certain sense.  A surjection $s:\n{n}\to \n{m}$ induces a functor $\mathcal{P}(s): \poset{m}\to \poset{n}$ which sends $\subsetA\in\ob{\poset{m}}$ to $s^{-1}(\subsetA)$.  The functor $\mathcal{P}(s)$ is strict monoidal, i.e., preserves intersections and the empty set. In the next proposition, we show that such functors induce morphisms of the monads described in Proposition \ref{lem:Tmonad}.

\begin{proposition}\label{prop:golden} Let $(\modcatA,\we,\syshocolim)$ be a complete category equipped with a system of homotopy colimits.  If $g:\poset{m}\to \poset{n}$ is a functor that preserves intersections and the empty set, then the following statements hold. 
\begin{enumerate}
    \item The functor $g$ induces a functor 
    \begin{align*}
     g^*:\Mod{\poset{n}}&\to \Mod{\poset{m}}
     \\
     \left(\catA,\theta\right)&\mapsto \left(\catA, \theta  \left(g\times \id{\catA} \right)\right).
    \end{align*}
    
    \item For every $\poset{n}$-module $(\catA, \theta)$, there exists a morphism of monads on $\fun{\catA}{\modcatA}$
from $\left(\ifibcube{g^* \theta}{\modcatA}, \mu_{g^*\theta}, \eta_{g^*\theta}\right)$ to $\left(\ifibcube{\theta}{\modcatA}, \mu_\theta, \eta_\theta \right)$, where the unit and multiplication transformations  are as  in Proposition \ref{lem:Tmonad}.
\end{enumerate}
\end{proposition}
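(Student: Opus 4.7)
For Part 1, I would verify directly that $\theta \circ (g \times \id{\catA})$ defines a $\poset{m}$-module structure on $\catA$. Unitality reduces to $\theta(a, g(\n{m})) = \theta(a, \n{n}) = a$, using $g(\n{m}) = \n{n}$ (a consequence of $g$ being strict monoidal); associativity follows from $\theta\bigl(\theta(a, g(\subsetA)), g(\subsetB)\bigr) = \theta\bigl(a, g(\subsetA) \cap g(\subsetB)\bigr) = \theta\bigl(a, g(\subsetA \cap \subsetB)\bigr)$, using intersection-preservation of $g$ together with associativity of $\theta$. Functoriality of $g^*$ on morphisms is immediate from the defining square of a $\poset{n}$-module morphism.

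For Part 2, my plan is to extend $g$ to a functor $\tau: \Lambda^{\times m} \to \Lambda^{\times n}$ satisfying $\tau \circ \phi_m = \phi_n \circ g$, and then invoke Lemma~\ref{lem:hocolim}(\ref{lem:hocolimprop2}) to produce the natural transformation $\beta: \ifibcube{g^*\theta}{\modcatA} \Rightarrow \ifibcube{\theta}{\modcatA}$ underlying the monad morphism. I would construct $\tau$ componentwise: for each $k \in \n{n}$, set $V_k := \{i \in \n{m} : k \notin g(\n{m} \setminus \{i\})\}$, which is nonempty since $\bigcap_i g(\n{m} \setminus \{i\}) = g(\emptyset) = \emptyset$; then define $\tau(l_1, \ldots, l_m)_k$ to be the iterated monoidal product of $\{l_i : i \in V_k\}$ in $\Lambda$, with empty product equal to the monoidal unit. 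Functoriality and strict monoidality of $\tau$ are inherited from those of the monoidal product on $\Lambda$, and the combinatorial identity $k \in g(\subsetA) \Leftrightarrow V_k \subseteq \subsetA$ (itself a consequence of intersection-preservation) yields $\tau \circ \phi_m = \phi_n \circ g$.

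Writing $\chi_F := \theintcube{(-)}{\theta}{F}$ for the $n$-cube of functors associated to $(\catA,\theta)$ and $F$, the natural transformation $\sigma$ required by Lemma~\ref{lem:hocolim}(\ref{lem:hocolimprop2}) may be taken to be the identity whenever $\tau^{-1}(\operatorname{Im}\phi_n) = \operatorname{Im}\phi_m$ (equivalently, $\bigcup_k V_k = \n{m}$): in this case, the explicit formula of Theorem~\ref{theorem:Kanlimits} shows that both $\Ran{(\chi_F \circ g)}{\phi_m}$ and $\tau^* \Ran{\chi_F}{\phi_n}$ return $F\tr{\theta}(g(\subsetA))$ at $\phi_m(\subsetA)$ and $\terminal{\modcatA}$ elsewhere. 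Verifying that $\beta$ is a morphism of monads then reduces to unit compatibility (which follows from $\tau(\phi_m(\n{m})) = \phi_n(\n{n})$, i.e.\ from $g(\n{m}) = \n{n}$) and multiplication compatibility (which follows from $\tau$ being strict monoidal, by a diagram chase paralleling that in the proof of Proposition~\ref{lem:Tmonad}). The main obstacle is ensuring the condition $\bigcup_k V_k = \n{m}$ in full generality: when it fails, one factors $g$ through a projection $\poset{m} \to \poset{m'}$ forgetting the redundant coordinates (those $i$ with $g(\n{m} \setminus \{i\}) = \n{n}$) followed by a functor satisfying the condition, and handles the projection factor separately, where the monad morphism is forced to be the unique map to a terminal functor.
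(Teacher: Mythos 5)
Your overall strategy coincides with the paper's on both parts. For part 1 the paper likewise verifies unitality and associativity of $\theta\circ(g\times\id{\catA})$ directly (unitality indeed forces one to read the hypothesis as including $g(\n{m})=\n{n}$, as you do) and checks functoriality via the module-morphism square. For part 2 the paper also lifts $g$ to a functor $\widehat g:\Lambda^{\times m}\to\Lambda^{\times n}$ with $\widehat g\circ\phi_m=\phi_n\circ g$, asserts that $\Ran{G(g^*\theta(a,-))}{\phi_m}$ equals $\Ran{G(\theta(a,-))}{\phi_n}\circ\widehat g$, feeds the identity transformation into Lemma \ref{lem:hocolim}(\ref{lem:hocolimprop2}), and disposes of unit and multiplication compatibility exactly as you propose (the latter because $g$ commutes with intersections). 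Your componentwise construction of $\tau$ via the sets $V_k$ and the equivalence $k\in g(\subsetA)\Leftrightarrow V_k\subseteq\subsetA$ supply precisely the details the paper waves off as ``easy to see'' and ``straightforward calculation,'' and they check out.

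The one point of divergence is your degenerate case $\bigcup_k V_k\neq\n{m}$, i.e.\ when some $i_0$ has $g(\n{m}\smallsetminus\{i_0\})=\n{n}$. You are right that this obstructs taking $\sigma$ to be the identity: the tuple $\underline{l}$ with $l_{i_0}=\mathscr{l}$ and all other entries $1$ lies outside $\operatorname{Im}\phi_m$ yet must be sent to $(1,\dots,1)\in\operatorname{Im}\phi_n$ by \emph{any} lift $\widehat g$ (since $(1,\dots,1)$ admits no non-identity outgoing morphisms in $\Lambda^{\times n}$), so the left Kan extension returns $\terminal{\modcatA}$ where the right returns $G(a)$, and no natural transformation exists in the direction Lemma \ref{lem:hocolim}(\ref{lem:hocolimprop2}) requires. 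The paper's proof silently passes over this; in its only application (Theorem 6.3) $g=\mathcal{P}(s)$ for $s$ surjective, where $g(\n{m}\smallsetminus\{i\})=\n{n}\smallsetminus s^{-1}(i)\neq\n{n}$, so the condition holds automatically. However, your proposed repair does not close the gap: for the projection factor, $\ifibcube{p^*\theta'}{\modcatA}$ is the iterated cofiber of a cube carrying identity maps in the redundant directions, which is only \emph{weakly} terminal, and neither it nor the terminal functor admits a canonical natural transformation to the target monad, so ``handling the projection separately'' produces no monad morphism. The accurate conclusion is that part 2 requires the additional hypothesis $g(\n{m}\smallsetminus\{i\})\neq\n{n}$ for all $i$ (equivalently your $\bigcup_k V_k=\n{m}$), under which your argument, and the paper's, goes through.
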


\begin{proof}  \textit{1.} This is a ``many objects'' version of the classical result that a ring map induces a functor between the associated module categories by restriction. More explicitly, since $g$ preserves intersections and the empty set, one can immediately verify that $g^* \theta$ satisfies the conditions required in Definition \ref{def:nmodule} to be a $\poset{n}$-module.  

Let $F:\catA\to \catB$ be a morphism of $\poset{n}$-modules from $(\catA, \theta_\catA)$ to $(\catB, \theta_\catB)$.  In the following diagram 
\begin{center} 
        \begin{tikzcd}[column sep=4.5em]
                \catA\times \poset{m}
                    \arrow[r, "\id{\catA}\times g"]    
                    \arrow[d, swap, "F\times\id{\poset{m}}\,"]
             &   
                \catA\times\poset{n}
                    \arrow[r, "\theta_{\catA}"]
                    \arrow[d, "F\times\id{\poset{n}}"]
            &   
                \catA
                    \arrow[d, "F"]
            \\
                \catB\times\poset{m} 
                    \arrow[r, swap, "\id{\catB}\times g"]
            &    
                \catB\times\poset{n}
                    \arrow[r, swap, "\theta_{\catB}"]
            &   
                \catB
        \end{tikzcd}
\end{center}
the left hand square commutes because both composites are $F\times g$, and the right hand square commutes because $F$ is a $\poset{n}$-module morphism.  Taken together, this shows that $F$ is a morphism from $\left(\catA, g^* \theta_{\catA}\right)$ to $\left(\catB, g^* \theta_{\catB}\right)$.  

\textit{2.}  To establish the second statement, we use several properties of systems of homotopy colimits.  The monad morphism from $\big(\ifibcube{g^* \theta}{\modcatA}, \mu_{g^*\theta}, \eta_{g^*\theta}\big)$ to $\left(\ifibcube{\theta}{\modcatA}, \mu_\theta, \eta_\theta \right)$ consists of the identity functor $\id{\fun{\catA}{\modcatA}}$ together with a natural transformation $\alpha:\ifibcube{g^*\theta}{\modcatA}\Rightarrow \ifibcube{\theta}{\modcatA}$. Recall that for $G\in\ob{\fun{\catA}{\modcatA}}$ and $a\in \ob\catA$, 
\begin{align*}
    \ifibcube{g^*\theta}{\modcatA}(G)(a)
        &=\hocolimunder{\Lambda^{\times m}} \ \Ran{G(g^*\theta(a, -))}{\phi_m}
        \\
    \ifibcube{\theta}{\modcatA}(G)(a)
        &=\hocolimunder{\Lambda^{\times n}} \ \Ran{G(\theta(a, -))}{\phi_n}.
\end{align*}
It is easy to see that one can explicitly construct a functor $\widehat{g}:\Lambda^{\times m}\to \Lambda^{\times n}$ making the diagram
\begin{center} 
        \begin{tikzcd}[column sep=4.5em]
                
                \poset{m}
                    \arrow[r, "g"]
                    \arrow[d, swap, "\phi_m"]
            &   
                \poset{n}
                    \arrow[d, "\phi_n"]
            \\
                
                \Lambda^{\times m}
                    \arrow[r, swap, "\widehat{g}"]
            &   
                \Lambda^{\times n}
        \end{tikzcd}
\end{center}
\begin{sloppypar}\tolerance 900
commute.  Straightforward calculation shows that the right Kan extension $\Ran{G(g^*\theta(a, -))}{\phi_m}$ is equal to $\Ran{G(\theta(a, -))}{\phi_n}  \widehat{g}$.
The natural transformation $\alpha$ is then simply the natural transformation the existence of which is guaranteed by property $2$ of Lemma \ref{lem:hocolim}. 
\end{sloppypar}

It follows from the definition of the unit natural transformation from Proposition \ref{lem:Tmonad} that for any $G\in\ob{\fun{\catA}{\modcatA}}$, the diagram
\begin{center}
        \begin{tikzcd}
                & G
                    \arrow[dl, swap, "\eta_{g^*\theta} \whisk G"]
                    \arrow[dr, "\eta_{\theta}"]
            \\  \ifibcube{g^*\theta}{\modcatA}  G
                    \arrow[rr, swap, "\alpha"]
            &&   G   \ifibcube{\theta}{\modcatA} 
        \end{tikzcd}
    \end{center}
commutes because $\theta(a, \emptyset)=\theta(a, g(\emptyset))$.

Lastly, we show that the natural transformation $\alpha$ respects the multiplication natural transformations. Using the definition of the multiplication from Proposition \ref{lem:Tmonad}, as well as of the natural transformation $\alpha$ and the monad $\ifibcube{\modcatA}{\theta}$, we see that this means that we must show that the following diagram commutes:
\begin{center}
    \begin{tikzcd}
         \hocolimunder{\Lambda^{\times n}\times \Lambda^{\times n}}\ \Ran{\cap^*G(g^*\theta(a, -))}{\phi_n\times \phi_n}
            \arrow[r]
            \arrow[d]
    &
     \hocolimunder{\Lambda^{\times n}}\ \Ran{G(g^*\theta(a, -))}{\phi_n}
            \arrow[d]
    \\
    \hocolimunder{\Lambda^{\times m}\times \Lambda^{\times m}}\ \Ran{\cap^*G(\theta(a, -))}{\phi_m\times \phi_m}
            \arrow[r]
    & \hocolimunder{\Lambda^{\times m}}\ \Ran{G(\theta(a, -))}{\phi_m}.
    \end{tikzcd}
\end{center}
Here the horizontal arrows are each instances of the multiplication map obtained using the natural transformation in Equation (\ref{eq:alpha}) and the vertical arrows are each an instance of the natural transformation $\alpha$.  In both cases, these maps are obtained by applying property $2$ of Lemma \ref{lem:hocolim} to the functors $g^*$ and $\cap^*$.  That the diagram above commutes follows immediately from the fact that $g^*$ and $\cap^*$ commute, since $g$ commutes with intersections.
\end{proof}

\begin{theorem}\label{theorem:pnmodstomonads}
Let $(\modcatA,\we,\syshocolim)$ be a complete category equipped with a system of homotopy colimits. There exists a contravariant functor
       \[
       \begin{split}
        \monadcube{n}{\modcatA}: \Mod{\poset{n}}^{\;op}&\to\monadcat\\
          (\catA,\theta) &\mapsto \monadcube{n}{\modcatA}(\theta)=\left( \fun{\catA}{\modcatA}, \left(\ifibcube{\theta}{\modcatA},\mult,\eta\right)\right).
       \end{split}
    \]
\begin{proof}
Given a $\poset{n}$-module $(\catA,\theta_\catA)$, we know from Proposition \ref{lem:Tmonad}, that $\monadcube{n}{\modcatA}(\theta_\catA)$ is indeed a monad on $\fun{\catA}{\modcatA}$, and it therefore suffices to show that a morphism of $\poset{n}$-modules $F:(\catA,\theta_\catA)\to (\catB,\theta_\catB)$ naturally induces a morphism of monads 
\[\monadcube{n}{\modcatA}\left(\theta_\catB\right)=\left(\fun{\catB}{\modcatA},\left(T_\catB, \mult_\catB,\eta_\catB\right)\right) \to \monadcube{n}{\modcatA}\left(\theta_\catA\right)=\left(\fun{\catA}{\modcatA},\left(T_\catA,\mult_\catA,\eta_\catA\right)\right).\]

We do this by setting $\monadcube{n}{\modcatA}(F)=(F^\ast,\alpha_F):\monadcube{n}{\modcatA}(\theta_\catB)\rightarrow\monadcube{n}{\modcatA}(\theta_\catA)$, where the natural transformation $\alpha_F:T_\catA  F^\ast\Rightarrow F^\ast  T_\catB$ is the identity natural transformation. This choice makes sense, since the fact that $F$ is a morphism of $\poset{n}$-modules implies that for any  $G\in\ob{\fun{\catB}{\modcatA}}$  and any $a\in\ob{\catA}$, 
\begin{align*}
    T_\catA   F^\ast (G)(a) 
        &= \hocolimunder{\Lambda^{\times n}}\Ran{\chi_{\theta_\catA, G  F}^{(-)}}{\varphi_n}(a)
    \\
        &=\hocolimunder{\Lambda^{\times n}}\Ran{G  F(\theta_\catA(a,-))}{\varphi_n}
    \\
        &= \hocolimunder{\Lambda^{\times n}}\Ran{G(\theta_\catB (F(a),-))}{\varphi_n}
    \\
        &= \left(\hocolimunder{\Lambda^{\times n}}\Ran{\chi_{\theta_\catB, G}^{(-)}}{\varphi_n}\right)  F(a)
    \\
        &=F^\ast   T_\catB (G)(a).
\end{align*}
It is easy to check that this association of  a morphism of monads to a morphism of $\poset{n}$-modules preserves composition and identities.

\
\end{proof}
   \end{theorem}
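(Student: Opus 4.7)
The plan is to leverage Proposition \ref{lem:Tmonad} for the object assignment and then extend $\monadcube{n}{\modcatA}$ to morphisms of $\poset{n}$-modules, verifying both that the extension produces bona fide morphisms of monads and that it respects composition contravariantly. First, I would set $\monadcube{n}{\modcatA}(\catA,\theta):=(\fun{\catA}{\modcatA},(\ifibcube{\theta}{\modcatA},\mu_\theta,\eta_\theta))$, which is a well-defined monad by Proposition \ref{lem:Tmonad}. Given a morphism $F:(\catA,\theta_\catA)\to(\catB,\theta_\catB)$ of $\poset{n}$-modules, contravariance dictates that the functor should produce a morphism of monads from $\monadcube{n}{\modcatA}(\theta_\catB)$ to $\monadcube{n}{\modcatA}(\theta_\catA)$. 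The natural choice for the underlying functor is the precomposition functor $F^\ast:\fun{\catB}{\modcatA}\to \fun{\catA}{\modcatA}$, so it remains to produce a natural transformation $\alpha_F:\ifibcube{\theta_\catA}{\modcatA}  F^\ast \Rightarrow F^\ast  \ifibcube{\theta_\catB}{\modcatA}$ satisfying the two monad morphism axioms of Definition \ref{def:monad}.

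The key calculation I would carry out is that the compatibility $F(\theta_\catA(a,\subsetA))=\theta_\catB(F(a),\subsetA)$ required of a $\poset{n}$-module morphism implies, for every $G\in\ob{\fun{\catB}{\modcatA}}$ and $a\in\ob{\catA}$, a pointwise equality of cubical diagrams $\theintcube{(-)}{\theta_\catA}{G  F}$ and $\theintcube{(-)}{\theta_\catB}{G}  F^\ast$ in $\fun{\catA}{\modcatA}$. Because right Kan extension along $\phi_n$ and the chosen homotopy colimit over $\Lambda^{\times n}$ are applied pointwise in $a$ to the same $\poset{n}$-diagram in $\modcatA$, this equality persists after forming the iterated homotopy cofiber. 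Consequently $\ifibcube{\theta_\catA}{\modcatA}(G\circ F) = F^\ast\,\ifibcube{\theta_\catB}{\modcatA}(G)$ as functors $\catA\to\modcatA$, and so $\alpha_F$ may be taken to be the identity natural transformation.

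Having reduced to $\alpha_F=\id{}$, verifying the monad morphism axioms amounts to showing that the units and multiplications constructed in Proposition \ref{lem:Tmonad} agree on the nose under this identification. The unit, arising from the comparison map $\widetilde{\eta^n}$ associated with the functor $\eta^n:\poset{0}\to\Lambda^{\times n}$ picking out the terminal object of the cube, is built only from data intrinsic to $\modcatA$ and the shape category; it does not reference $\catA$ or $\catB$ except through pointwise evaluation. An identical remark applies to the multiplication, which is derived via Lemma \ref{lem:hocolim}(2) from the strict monoidal structure of $\cap$ on $\poset{n}$ and $\dicei$ on $\Lambda^{\times n}$. Hence both structure maps match after precomposition with $F$, and $(F^\ast,\id{})$ is a morphism of monads. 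Functoriality is then immediate: $(G F)^\ast = F^\ast G^\ast$ and identities compose to identities, so $\monadcube{n}{\modcatA}(G F) = \monadcube{n}{\modcatA}(F)\circ \monadcube{n}{\modcatA}(G)$.

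The main obstacle I anticipate is the diagrammatic verification that the multiplication is preserved under the identity $\alpha_F$. Even though intuitively this is ``obvious from naturality,'' it requires tracing simultaneously through the Fubini isomorphism of Definition \ref{def:hocolim}(1), the universal property of the right Kan extension producing the natural transformation in Equation (\ref{eq:alpha}), and the comparison map $\widetilde{\dicei}$ coming from Definition \ref{def:hocolim}(2), and confirming that each of these naturality-type constructions commutes with the evident reindexing along $F$. Once this bookkeeping is in place, the theorem follows.
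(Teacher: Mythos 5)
Your proposal is correct and follows essentially the same route as the paper: the object assignment comes from Proposition \ref{lem:Tmonad}, a module morphism $F$ is sent to $(F^\ast,\alpha_F)$ with $\alpha_F$ the identity, justified by the on-the-nose equality of the cubical diagrams $\theintcube{(-)}{\theta_\catA}{G\circ F}$ and $\theintcube{(-)}{\theta_\catB}{G}\circ F$ and hence of their iterated homotopy cofibers. If anything, you are slightly more careful than the paper in flagging that the monad morphism axioms (compatibility of units and multiplications under the identity $\alpha_F$) still require checking that the structure maps from Proposition \ref{lem:Tmonad} are built pointwise from shape-category data and so commute with reindexing along $F$ — a point the paper leaves implicit.
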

   
\section{Recovering dual calculus}

The goal of this section is to show that the cocross effects in the dual calculus tower of \cite{McCarthy2001} are obtained naturally  from a $\poset{n}$-module.  Dual calculus  is a dualization  of the constructions that appear in \cite{JM04} for functors of abelian categories (McCarthy's dual calculus was originally constructed for functors of spectra).    The functor calculus  of \cite{JM04} provides both an obstruction, the $(n+1)^{\textrm{st}}$ cross effect $\text{cr}_{n+1} F$, which measures the failure of a functor $F$ to take $(n+1)$-fold coproducts to $(n+1)$-fold products, and the best ``degree $n$'' approximation of $F$ to a functor $P_{n}F$ satisfying this property. Dually, the $(n+1)^{\textrm{st}}$ cocross effect of the dual calculus tower measures the failure of a functor to take $(n+1)$-fold products to coproducts, and can be used to provide the best ``codegree $n$'' approximation to a functor.  

The construction of the dual functor calculus tower in \cite{McCarthy2001} proceeds in stages. First, the cocross effects are defined using homotopy cofibers of certain cubical diagrams.  The cocross effects give rise to a monad, which is used to define the stages of the dual calculus tower, $P^nF$.  Finally, the functors $P^nF$ are shown to assemble into a tower of functors satisfying a universal property.  In this section, we will show that the construction of the cocross effects of a functor $F:\catA\to \catB$ can be obtained from a $\poset{n}$-module structure on the category $\catA$.  The cocross effects and the associated monad are then shown to be direct consequences of the constructions in Section 5.  

Let $\catA$ be a category with finite products and a zero object denoted by $\basepoint{\catA}$. Given a function $g:\n{n}\to \ob{\catA}$, a key element of the construction of the cocross effects in \cite[Definition 1.3]{McCarthy2001} is a functor 

\begin{align*}
   U^g: \poset{n}&\to \catA 
   \\
   \subsetB & \mapsto \prod_{v\in \subsetB} g(v).
\end{align*}
Functoriality is defined using the universal properties of $\basepoint{\catA}$ and of the product.  

Recall from Example \ref{ex:theta^n} that for each $n\geq 1$ there is a $\poset{n}$-module structure 
\[ \theta^n:\catA^{\times n} \times\poset{n}   \to \catA^{\times n} \]
taking $( (a_1, \ldots, a_n  ),\subsetB)$ to $(b_1, \ldots, b_n)$ where $b_i=a_i$ for $i\in \subsetB$ and $b_i=\basepoint{\catA}$ otherwise.
The functor $\sc{U}^g$ can now be expressed in terms of this $\poset{n}$-module structure by composing $\theta^n$ with the product functor $\sqcap$ from Example \ref{prodmonad}:
\begin{align*}
    & \catA^{\times n}\times \poset{n} \xrightarrow{\theta^n}\catA^{\times n}\xrightarrow{\sqcap} \catA.
\end{align*}
The fact that $\sqcap  \theta^n((g(1), \ldots, g(n),\subsetB)\cong U^g(\subsetB)$ follows immediately
from the definitions of $\theta^n$ and $\sqcap$ and the fact that $\basepoint{\catA}\times a\cong a$ for all objects $a\in\ob{\catA}$.  
The functoriality of $U^g$ can now be more precisely stated. For a morphism $\iota:\subsetA\subset \subsetB$ in $\poset{n}$, the morphism $U^g(\iota)$ is defined as follows: 
\[ U^g(\iota):\ = \sqcap\left(\theta^n\left(\id{(g(1), \ldots, g(n))},\iota \right) \right).\]

Throughout the rest of this section, let $(\modcatA,\we,\syshocolim)$ be a complete category equipped with a system of homotopy colimits as in Definition \ref{def:hocolim}.
Let $F:\catA\to \modcatA$ be a functor.  Using the product functor $\sqcap:\catA^{\times n}\to \catA$, we obtain an $n$-cubical diagram
\[\theintcube{}{\theta^n}{F \sqcap}:\poset{n}\to \fun{\catA^{\times n}}{\modcatA}\]
of functors, as in Equation \eqref{e:ncubefrommodule}.  Using the identification of $U^g$ with $\sqcap  \theta^n$, we restate the definition of the cocross effects from \cite{McCarthy2001}.

\begin{definition}\label{def:cocross}{\cite[Def. 1.3]{McCarthy2001}}   For a functor $F:\catA \to \modcatA$, define the \emph{$n$-th cocross effect of $F$} to be the iterated homotopy cofiber of the $n$-cube $\theintcube{}{\theta^n}{F  \sqcap}$ in $\fun{\catA^{\times n}}{\modcatA}$:
\[ \cross{n}F\ = \icofib \theintcube{}{\theta^n}{F  \sqcap}.\]
\end{definition}

In other words, using the notation of Proposition \ref{lem:Tmonad},
$$\cross{n}F=\ifibcube{\theta^n}{\modcatA}(F  \sqcap).$$

\begin{proposition}\label{thm:cocrossmonad}
   \begin{sloppypar}
       The diagonal $n$-th cocross effect $\Delta^*\cross{n}$ construction underlies a monad $(\ifibcube{}{\cross{n}}, \mu_{\cross{n}}, \eta_{\cross{n}})$  on $\fun{\catA}{\modcatA}$.  \end{sloppypar}
\end{proposition}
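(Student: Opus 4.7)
The plan is to derive the monad structure on $\Delta^*\cross{n}$ by combining the main construction of Proposition~\ref{lem:Tmonad} with the adjunction-and-monad principle of Lemma~\ref{lem:adjmonad}, applied to the functor-category adjunction induced by $\Delta \dashv \sqcap$.

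First we invoke Example~\ref{ex:theta^n} to equip $\catA^{\times n}$ with the $\poset{n}$-module structure $\theta^n$, and then apply Proposition~\ref{lem:Tmonad} to obtain the monad $(\ifibcube{\theta^n}{\modcatA}, \mu, \eta)$ on $\fun{\catA^{\times n}}{\modcatA}$. Next we promote the adjunction $\Delta \dashv \sqcap : \catA \to \catA^{\times n}$ of Example~\ref{prodmonad} to an adjunction on functor categories by pre-composition. A standard 2-categorical fact, or a direct check on units and counits, shows that pre-composition reverses the direction of adjointness, so one obtains
\[
\sqcap^* \dashv \Delta^* : \fun{\catA^{\times n}}{\modcatA} \to \fun{\catA}{\modcatA},
\]
where $\sqcap^* = (-)\circ \sqcap$ is the left adjoint and $\Delta^* = (-) \circ \Delta$ is the right adjoint.

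With this adjunction in hand, Lemma~\ref{lem:adjmonad} applied to the monad $\ifibcube{\theta^n}{\modcatA}$ yields a monad on $\fun{\catA}{\modcatA}$ whose underlying endofunctor is the composite $\Delta^* \circ \ifibcube{\theta^n}{\modcatA} \circ \sqcap^*$. The final step is to identify this composite with $\Delta^*\cross{n}$: unwinding Definition~\ref{def:cocross}, for any $F$ in $\fun{\catA}{\modcatA}$ one has
\[
\cross{n}(F) = \ifibcube{\theta^n}{\modcatA}(F \circ \sqcap) = \ifibcube{\theta^n}{\modcatA}(\sqcap^* F),
\]
so the resulting endofunctor is exactly $\Delta^* \cross{n}$, and the induced unit and multiplication play the roles of $\eta_{\cross{n}}$ and $\mu_{\cross{n}}$.

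We expect no serious obstacle beyond careful bookkeeping; the one point that requires care is the direction of the induced functor-category adjunction, which is pinned down by the observation above (equivalently, $\sqcap^*$ coincides with the right Kan extension along $\Delta$). Once this is settled, the result is a direct consequence of results already established in the paper.
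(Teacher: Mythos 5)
Your proof is correct and follows essentially the same route as the paper's: form the monad $\ifibcube{\theta^n}{\modcatA}$ on $\fun{\catA^{\times n}}{\modcatA}$ via Proposition~\ref{lem:Tmonad}, pass to the precomposition adjunction $\sqcap^*\dashv\Delta^*$, and apply Lemma~\ref{lem:adjmonad} to conclude that $\Delta^*\circ\ifibcube{\theta^n}{\modcatA}\circ\sqcap^*=\Delta^*\cross{n}$ underlies a monad. One small slip in your final parenthetical: since $\sqcap^*$ is \emph{left} adjoint to $\Delta^*$, it coincides with the \emph{left} Kan extension along $\Delta$ (equivalently, $\Delta^*$ is the right Kan extension along $\sqcap$), though this aside plays no role in the argument, whose adjunction direction you have already pinned down correctly.
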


\begin{proof}  Let $\left(\ifibcube{\theta^n}{\modcatA}, \mu_n, \eta_n\right)$ be the monad on $\fun{\catA^{\times n}}{\modcatA}$ from Proposition \ref{lem:Tmonad} and consider the adjoint pair of functors $\Delta\dashv \sqcap:\catA\to \catA^{\times n}$ from Example \ref{prodmonad}.  Precomposition with these functors yields an adjunction $\sqcap^*\dashv \Delta^*: \fun{\catA^{\times n}}{\modcatA}\to \fun{\catA}{\modcatA}$, so by Lemma \ref{lem:adjmonad}, the composite $\ifibcube{}{\cross{n}} := \Delta^*  \ifibcube{\theta^n}{\modcatA}   \sqcap^*$ underlies a monad on $\fun{\catA}{\modcatA}$.    The functor $\ifibcube{}{\cross{n}}$ is precisely the diagonal cocross effect $\Delta^*\cross{n}$.
\end{proof}

The first main theorem of \cite{McCarthy2001} used in the construction of the dual calculus tower relates the $n$-th cocross effect to the $m$-th cocross effect of a functor to spectra whenever there is a surjection $s:\n{n}\to \n{m}$.  We now extend this to our setting.  

Let $Surj$ denote the category which has as objects the sets $\n{n}$ and surjective functions as morphisms.  It will be useful to have notation for several variants of the diagonal and product functors needed to compare $\poset{k}$-modules for different $k$'s. To keep track of these, let $\Delta^k:\catA\to \catA^{\times k}$ and $\sqcap^k:\catA^{\times k}\to \catA$ denote the relevant functors to and from the category $\catA^{\times k}$.  Given a surjection $s:\n{n}\to \n{m}$, let $\Delta^{(s)}:\catA^{\times m}\to \catA^{\times n}$ denote the diagonal functor defined by

\[ \Delta^{(s)}(a_1, \ldots, a_m)=\left(a_{s(1)},...,a_{s(n)}\right)\] 
 and let $\sqcap^{(s)}:\catA^{\times n}\to \catA^{\times m}$ denote the product functor defined by  
\[ \sqcap^{(s)} (a_1, \ldots , a_n) = \left( \prod_{i\in s^{-1}(1)} a_i\text{  }, \ldots , \prod_{i\in s^{-1}(m)} a_i \right). \]  
These two variants of the usual diagonal and product functors are in fact obtained from the usual ones as follows.  Let $|s^{-1}(i)|$ denote the cardinality of the subset $s^{-1}(i)$ in $\n{n}$.  Then the adjoint pairs of functors for $i=1, \ldots, m$ 
\[\Delta^{|s^{-1}(i)|}\dashv \sqcap^{|s^{-1}(i)|}:\catA\to \catA^{\times |s^{-1}(i)|}\]
can be combined to produce an adjoint pair of functors
\[ \Delta^{|s^{-1}(1)|}\times \cdots \times \Delta^{|s^{-1}(m)|}\dashv \sqcap^{|s^{-1}(1)|}\times \cdots \times \sqcap^{|s^{-1}(m)|}\]
from $\catA^{\times m}$ to $\catA^{s^{-1}(1)\times \cdots \times s^{-1}(m)}=\catA^{\times n}$.
One can verify from the definitions that
\begin{align*} 
\Delta^{(s)}&= \Delta^{|s^{-1}(1)|}\times \cdots \times \Delta^{|s^{-1}(m)|}
\\
\sqcap^{(s)} &= \sqcap^{|s^{-1}(1)|}\times \cdots \times \sqcap^{|s^{-1}(m)|}.
\end{align*}
In particular, there is an adjunction $\Delta^{(s)}\dashv \sqcap^{(s)}: \catA^{\times m}\to \catA^{\times n}$.

\begin{theorem}\cite[Theorem 1]{McCarthy2001} There exists a well-defined functor 
\[Surj^{op}\to \monadcat_{\fun{\catA}{\modcatA}} \]
given by sending the set $\n{n}$ to the monad $\left(\ifibcube{}{\cross{n}}, \mu_{\cross{n}}, \eta_{\cross{n}}\right)$.
\end{theorem}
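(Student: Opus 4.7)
My plan is to produce, for each surjection $s:\n{n}\to\n{m}$, the required monad morphism between $\ifibcube{}{\cross{m}}$ and $\ifibcube{}{\cross{n}}$ on $\fun{\catA}{\modcatA}$ by combining Proposition \ref{prop:golden} with Theorem \ref{theorem:pnmodstomonads} on the $\catA^{\times k}$ side, and then transporting the result via Lemma \ref{lem:conjunctionadjunction}. I set $g := \mathcal{P}(s):\poset{m}\to\poset{n}$; since $s$ is surjective, $g$ preserves intersections and the empty set, so Proposition \ref{prop:golden} produces a morphism of monads on $\fun{\catA^{\times n}}{\modcatA}$ from $\ifibcube{g^\ast\theta^n}{\modcatA}$ to $\ifibcube{\theta^n}{\modcatA}$. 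In parallel, a direct check shows that $\sqcap^{(s)}:\catA^{\times n}\to\catA^{\times m}$ is a morphism of $\poset{m}$-modules from $(\catA^{\times n},g^\ast\theta^n)$ to $(\catA^{\times m},\theta^m)$; this uses that $\basepoint{\catA}$ is a zero object, so that products of copies of $\basepoint{\catA}$ remain $\basepoint{\catA}$. Applying Theorem \ref{theorem:pnmodstomonads} to this module morphism and composing with the one from Proposition \ref{prop:golden} yields a morphism of monads from $\ifibcube{\theta^m}{\modcatA}$ on $\fun{\catA^{\times m}}{\modcatA}$ to $\ifibcube{\theta^n}{\modcatA}$ on $\fun{\catA^{\times n}}{\modcatA}$, whose underlying functor is $(\sqcap^{(s)})^\ast$.

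Next, I will apply Lemma \ref{lem:conjunctionadjunction} with the adjunctions $(\sqcap^k)^\ast \dashv (\Delta^k)^\ast:\fun{\catA^{\times k}}{\modcatA}\to \fun{\catA}{\modcatA}$ for $k=m,n$ that arise by precomposition from $\Delta^k\dashv \sqcap^k$ of Example \ref{prodmonad}. The equalities $\sqcap^n = \sqcap^m\circ\sqcap^{(s)}$ and $\Delta^n = \Delta^{(s)}\circ\Delta^m$, which hold on the nose, let me take one of $\tau_L,\tau_R$ to be the identity, while the other is assembled from the coordinatewise diagonals $a\to a^{\times |s^{-1}(j)|}$ comparing $\Delta^m$ with $\sqcap^{(s)}\circ\Delta^n$. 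After verifying the two compatibility equations of Lemma \ref{lem:conjunctionadjunction}, the lemma produces the desired monad morphism between $\ifibcube{}{\cross{m}}$ and $\ifibcube{}{\cross{n}}$ on $\fun{\catA}{\modcatA}$.

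To conclude, I will verify functoriality: identities are clear, since the identity surjection gives $g = \id_{\poset{n}}$ and all subsequent constructions reduce to identities. For a composite $\n{n}\xrightarrow{s_2}\n{m}\xrightarrow{s_1}\n{\ell}$, one combines the strict compatibilities $\mathcal{P}(s_1 s_2) = \mathcal{P}(s_2)\mathcal{P}(s_1)$, $\sqcap^{(s_1 s_2)} = \sqcap^{(s_1)}\sqcap^{(s_2)}$, and $\Delta^{(s_1 s_2)} = \Delta^{(s_2)}\Delta^{(s_1)}$ with the functoriality of Proposition \ref{prop:golden}, the contravariant functoriality of Theorem \ref{theorem:pnmodstomonads}, and the composability of Lemma \ref{lem:conjunctionadjunction} across stacked adjunctions. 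The main obstacle, in my view, will be the verification of the two compatibility conditions in Lemma \ref{lem:conjunctionadjunction}: they mix the unit/counit of the $k=m$ and $k=n$ adjunctions with the diagonal maps defining $\tau_R$, and although they reduce to the triangle identities of $\Delta^k\dashv\sqcap^k$ together with naturality, the diagram chase requires careful bookkeeping.
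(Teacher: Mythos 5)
Your overall architecture is the same as the paper's (a module-level comparison built from $\mathcal{P}(s)$ and Proposition \ref{prop:golden}, followed by Lemma \ref{lem:conjunctionadjunction} to descend to the diagonal), but the middle step as you have set it up does not go through. First, $\sqcap^{(s)}:(\catA^{\times n},\mathcal{P}(s)^*\theta^n)\to(\catA^{\times m},\theta^m)$ is not a strict morphism of $\poset{m}$-modules: for $j\notin\subsetB$ the $j$-th coordinate of $\sqcap^{(s)}\bigl(\mathcal{P}(s)^*\theta^n((a_1,\ldots,a_n),\subsetB)\bigr)$ is $\prod_{i\in s^{-1}(j)}\basepoint{\catA}$, which is only isomorphic to $\basepoint{\catA}$, whereas Definition \ref{def:nmodule} and the proof of Theorem \ref{theorem:pnmodstomonads} (where the structure transformation $\alpha_F$ is taken to be the identity) require the square to commute on the nose. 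The paper avoids this by using the other adjoint: $\Delta^{(s)}:(\catA^{\times m},\theta^m)\to(\catA^{\times n},\mathcal{P}(s)^*(\theta^n))$ \emph{is} a strict module morphism.

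Second, and more seriously, orienting the functor in Lemma \ref{lem:conjunctionadjunction} as $F=(\sqcap^{(s)})^*:\fun{\catA^{\times m}}{\modcatA}\to\fun{\catA^{\times n}}{\modcatA}$ forces the $m$-adjunction into the unprimed slot and the $n$-adjunction into the primed slot. Then $\tau_L:(\sqcap^n)^*\Rightarrow(\sqcap^{(s)})^*(\sqcap^m)^*$ is indeed the identity, but $\tau_R$ must be a natural transformation $(\Delta^n)^*(\sqcap^{(s)})^*\Rightarrow(\Delta^m)^*$, which would have to come from a natural transformation $\sqcap^{(s)}\,\Delta^n\Rightarrow\Delta^m$, i.e., from natural maps $a^{\times|s^{-1}(j)|}\to a$; the coordinatewise diagonals you propose point in the opposite direction. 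Moreover, even granting such a $\tau_R$, the lemma's output $\beta:R'T'L'\Rightarrow RTL$ would read $\ifibcube{}{\cross{n}}\Rightarrow\ifibcube{}{\cross{m}}$, reversed from the transformation $\ifibcube{}{\cross{m}}\Rightarrow\ifibcube{}{\cross{n}}$ that the construction requires. The repair is exactly the paper's choice: take $F=(\Delta^{(s)})^*$ with the identity natural transformation supplied by Theorem \ref{theorem:pnmodstomonads}; then $\tau_R$ is the identity because $\Delta^{(s)}\circ\Delta^m=\Delta^n$, $\tau_L$ is the unit of $(\sqcap^{(s)})^*\dashv(\Delta^{(s)})^*$ whiskered with $(\sqcap^m)^*$, and $\beta$ comes out as $\ifibcube{}{\cross{m}}\Rightarrow(\Delta^n)^*\,\ifibcube{\mathcal{P}(s)^*\theta^n}{\modcatA}\,(\sqcap^n)^*$, which one then composes with the morphism of Proposition \ref{prop:golden} to land in $\ifibcube{}{\cross{n}}$.
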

\begin{sloppypar}\tolerance 900
The proof of this theorem is technical, involving many constructions that are somewhat hidden in our notation for monads arising from $\poset{n}$-modules.  The core of the proof involves producing a morphism of monads taking $\left(\ifibcube{}{\cross{m}}, \mu_{\cross{m}}, \eta_{\cross{m}}\right)$ to $\left(\ifibcube{}{\cross{n}}, \mu_{\cross{n}}, \eta_{\cross{n}}\right)$. It may be helpful for the reader to know that there are really only three key monad morphisms used in the proof.
\end{sloppypar}
\begin{enumerate}
    \item Since $\cross{n}$ and $\cross{m}$ are obtained from the $\poset{n}$-module $(\theta^n, \catA^{\times n})$ and the $\poset{m}$-module $(\theta^m, \catA^{\times m})$, respectively, there is no direct comparison of $\cross{n}$ and $\cross{m}$ via modules.  Given a surjection $s:\n{n}\to \n{m}$, we first produce a $\poset{m}$-module, $(\poset{s}^*(\theta^n)\,, \catA^{\times n})$, from the $\poset{n}$-module $(\theta^n, \catA^{\times n})$.   There is a module morphism from $(\theta^m, \catA^{\times m})$ to $(\poset{s}^*(\theta^n)\,, \catA^{\times n})$, and we apply Theorem \ref{theorem:pnmodstomonads} to obtain a map of the associated monads.
    \item The $m$-cubical diagram giving rise to the monad $\monadcube{m}{\modcatA}\big(\mathcal{P}(s)^*(\theta^n)\big)$   can be identified as a sub-cube of the $n$-cubical diagram used to compute the $n$-th cocross effect.  There is an induced map between the iterated cofibers of these cubes.
    \item The final ingredient in the proof is a relationship between the different diagonal functors needed to evaluate the $m$-th and $n$-th cocross effects on their diagonals.  The adjunction between $\Delta^{(s)}$ and $\sqcap^{(s)}$ produces the desired map of monads, using Lemma \ref{lem:conjunctionadjunction}.
\end{enumerate}

\begin{proof}
\begin{sloppypar}\tolerance 900
We proved in Proposition \ref{thm:cocrossmonad} that $\left(\ifibcube{}{\cross{n}}, \mu_{\cross{n}}, \eta_{\cross{n}}\right)$  is indeed a monad on $\fun{\catA}{\modcatA}$, so it remains to prove that for every surjection $s:\n{n}\to \n{m}$, there is a morphism of monads taking $\left(\ifibcube{}{\cross{m}}, \mu_{\cross{m}}, \eta_{\cross{m}}\right)$ to $\left(\ifibcube{}{\cross{n}}, \mu_{\cross{n}}, \eta_{\cross{n}}\right)$.  \end{sloppypar}

The surjection $s$ induces a functor $\mathcal{P}{(s)}:\poset{m}\to \poset{n}$ given by sending $\subsetA\in\ob{\poset{m}}$ to $s^{-1}(\subsetA)$.  This in turn induces a functor $\mathcal{P}(s)^*:\Mod{\poset{n}}\to \Mod{\poset{m}}$ given by sending a $\poset{n}$-module structure $\theta$  on $\catA$ to its precomposition with $\id{\catA}\times \mathcal{P}(s)$:
\[
\begin{tikzcd}
    \catA\times \poset{m} 
        \arrow[rr, "\id{\catA}\times \mathcal{P}(s)"]
    && \catA\times \poset{n}
        \arrow[r, "\theta"]
    & \catA.
\end{tikzcd}
\]
The resulting composite is a $\poset{m}$-module structure precisely because $s^{-1}$ preserves intersections and the empty set, by Proposition \ref{prop:golden}.

To recover McCarthy's result, first note that there is a commuting diagram 
\begin{center}\begin{equation*}\label{d:notmodules}
    \begin{tikzcd}
    \catA^{\times m}\times \poset{m}
        \arrow[d, swap, " \Delta^{(s)}\times \mathcal{P}(s)\,"]
        \arrow[rr, "\theta^m"]
    && \catA^{\times m}
        \arrow[d, "\Delta^{(s)}"]
    \\
    \catA^{\times n}\times \poset{n} 
        \arrow[rr, swap, "\theta^n"]
    && \catA^{\times n}.
    \end{tikzcd}
    \end{equation*}
\end{center}
Rephrasing slightly in terms of the functor $\mathcal{P}(s)^*$, we see that the functor $\Delta^{(s)}$ is a morphism of $\poset{m}$-modules from $(\catA^{\times m}, \theta^m)$ to $(\catA^{\times n}, \mathcal{P}(s)^*(\theta^n))$:
\begin{center}\begin{equation*}
    \begin{tikzcd}
    \catA^{\times m}\times \poset{m}
        \arrow[d, swap, " \Delta^{(s)}\times \id{\poset{m}}\,"]
        \arrow[rr, "\theta^m"]
    && \catA^{\times m}
        \arrow[d, "\Delta^{(s)}"]
    \\
    \catA^{\times n}\times \poset{m} 
        \arrow[rr, swap, "\mathcal{P}(s)^*(\theta^n)"]
    && \catA^{\times n}.
    \end{tikzcd}
    \end{equation*}
\end{center}
Applying Theorem \ref{theorem:pnmodstomonads} now produces a map of monads on $\fun{\catA^{\times m}}{\modcatA}$, 
\[ \monadcube{m}{\modcatA}\left(\mathcal{P}(s)^*\theta^n\right)\to\monadcube{m}{\modcatA}\left(\theta^m\right).\]

Recall from the proof of Theorem \ref{theorem:pnmodstomonads} that the monad morphism is induced by the functor
\[ \left(\Delta^{(s)}\right)^*: \fun{\catA^{\times n}}{\modcatA}\to \fun{\catA^{\times m}}{\modcatA}\]
given by precomposition with $\Delta^{(s)}$, and the natural transformation 
\[ \ifibcube{\theta^m}{\modcatA}  \left(\Delta^{{(s)}}\right)^* \Rightarrow \left(\Delta^{{(s)}}\right)^* \ifibcube{\mathcal{P}(s)^*\theta^n}{\modcatA}, \]
which is the identity natural transformation.

Now consider the monads given by pre- and post-composing with the diagonal and product functors, $\Delta^{(s)}$ and $\sqcap^{(s)}$, respectively.  Since $\Delta^{(s)}\dashv \sqcap^{(s)}$ is an adjoint pair, $\left(\sqcap^{(s)}\right)^*\dashv \left(\Delta^{(s)}\right)^*$ is also an adjoint pair, with $\left(\Delta^{(s)}\right)^*$ now being the right adjoint. Consider the diagram

\begin{center}
    \begin{tikzcd}[row sep=1.2em]
    &
    \fun{\catA^{\times n}}{\modcatA}
        \arrow[rr, swap, "\ifibcube{\mathcal{P}(s)^* \theta^n}{\modcatA}", ""{name=A}, ""{name=F1}]
        \arrow[rr, bend left=35,"\ifibcube{\theta^n}{\modcatA}", ""{name=F2}]
        \arrow[dd, "\left(\Delta^{(s)}\right)^\ast"]
    &&
    \fun{\catA^{\times n}}{\modcatA}
        \arrow[dd,  "\left(\Delta^{(s)}\right)^\ast"]
        \arrow[dr, "\left(\Delta^n\right)^\ast"]
    &
\\
    \fun{\catA}{\modcatA}
        \arrow[ur, "(\sqcap^n)^\ast"]
        \arrow[dr, swap, "\left(\sqcap^m\right)^\ast", ""{name=alpha}]
    &
    &
    &
    &
    \fun{\catA}{\modcatA}.
\\
    &
    \fun{\catA^{\times m}}{\modcatA}
        \arrow[rr, swap, "\ifibcube{\theta^m}{\modcatA}"]
    &&
    \fun{\catA^{\times m}}{\modcatA}
        \arrow[ur, swap,  "\left(\Delta^m\right)^\ast"]
    &
    \arrow[Rightarrow, swap, "\alpha", shorten=5mm, from=alpha, to=1-2]
    \arrow[Rightarrow, "\eta\;", shorten=2mm, from=F1, to=F2]
    \end{tikzcd}
\end{center}
The existence of a morphism of monads
\[ \eta: \left(\ifibcube{\mathcal{P}(s)^*\theta^n}{\modcatA}, \mu_s, \eta_s\right) \to \left(\ifibcube{\theta^n}{\modcatA}, \mu, \eta\right)\]
is a consequence  of Proposition \ref{prop:golden}. The existence of the natural transformation $\alpha$ follows by noting that 
\[ \sqcap^n = \sqcap^m  \sqcap^{(s)},\]
so that $\left(\Delta^{(s)}\right)^*  \left(\sqcap^n\right)^* = \left(\Delta^{(s)}\right)^*  \left(\sqcap^{(s)}\right)^*  \left(\sqcap^m\right)^*$.
Whiskering the unit 
$$u^{(s)}:\id{\fun{\catA^{\times m}}{\modcatA}}\Rightarrow \left(\Delta^{(s)}\right)^*  \left(\sqcap^{(s)}\right)^*$$ 
of the adjunction $\left(\sqcap^{(s)}\right)^*\dashv \left(\Delta^{(s)}\right)^*$ on the right with $\left(\sqcap^m\right)^*$ gives rise to a natural transformation 
$$\alpha=u^{(s)}*\left(\sqcap^m\right)^*:\left(\sqcap^m\right)^*\Rightarrow \left(\Delta^{(s)}\right)^*  \left(\sqcap^n\right)^*.$$ 
We are now in the context of Lemma \ref{lem:conjunctionadjunction}, with
$$L= (\sqcap^n)^*,\quad R= (\Delta^n)^*,\quad T= \ifibcube{\modcatA}{\mathcal{P}(s)^* \theta^n},$$
$$L'= \left(\sqcap^m\right)^*,\quad R'= (\Delta^m)^*,\quad T'= \ifibcube{\modcatA}{\theta^m},$$
and
$$F=\left(\Delta ^{(s)}\right)^*,\quad \tau_L=\alpha, \quad\tau_R=\id{(\Delta^{n})^{*}}.$$
To ensure that we can apply Lemma \ref{lem:conjunctionadjunction}, we need to show that
$$\left(\left(\Delta ^{(s)}\right)^*\whisk\epsilon^n\right)\left(\alpha \whisk\left(\Delta^n\right)^*\right) = \epsilon^m\whisk(\Delta ^{(s)})^*$$
and
$$\left((\Delta^m)^* \whisk\alpha\right)u^m =u^n,$$
where $\epsilon ^k$ and $u^k$ denotes the counit and unit of the adjunction $(\sqcap^{k})^*\dashv (\Delta^{k})^*$ for all $k$.  The second of these equations is an immediate consequence of the fact that the adjunction $(\sqcap^{n})^*\dashv (\Delta^{n})^*$ is the composite $(\sqcap^{m})^*\dashv (\Delta^{m})^*$ and $\big(\sqcap^{(s)}\big)^*\dashv \big(\Delta^{(s)}\big)^*$, while the proof of the second requires further that we apply the triangle identity for the counit of the adjunction $\big(\sqcap^{(s)}\big)^*\dashv \big(\Delta^{(s)}\big)^*$.

Thus there is a morphism of the monads whose underlying functors are the composites along the bottom and top of the diagram above:
\[ (\Delta^m)^*  \ifibcube{\modcatA}{\theta^m}  (\sqcap^m)^* \Rightarrow (\Delta^n)^*  \ifibcube{\modcatA}{\mathcal{P}(s)^* \theta^n}  (\sqcap^n)^*. \]
The end result, composing this monad morphism with $\eta$,  is a natural transformation 
\[
\ifibcube{}{\cross{m}} = (\Delta^m)^*  \ifibcube{\theta^m}{\modcatA}  (\sqcap^m)^* \Rightarrow (\Delta^n)^*  \ifibcube{\theta^n}{\modcatA}  (\sqcap^n)^* = \ifibcube{}{\cross{n}}
\]
underlying a monad morphism from  $\left(\ifibcube{}{\cross{m}}, \mu_{\cross{m}}, \eta_{\cross{m}}\right)$ to  $\left(\ifibcube{}{\cross{n}}, \mu_{\cross{n}}, \eta_{\cross{n}}\right)$ 
as desired.

\end{proof}

\bibliography{cmcdhc}{}
\bibliographystyle{plain}

\end{document}
